\numberwithin{equation}{section}
\newtheorem{lem}{Lemma}[section]%
\newtheorem{theorem}[lem]{Theorem}%
\newtheorem{defi}[lem]{Definition}%
\newtheorem{cor}[lem]{Corollary}%
\newtheorem{prob}[lem]{Problem}%
\newtheorem{prop}[lem]{Proposition}%
\def\a{\alpha}
  \def\G{\Gamma}
\def\nd{\mathrel{\bigm|\kern-.7em/}}
\def\f{\noindent}
\def\R{{\mathcal R}}
\def\Aut{\hbox{\rm Aut}}
\def\Cay{\hbox{\rm Cay}}
\def\BiCay{\hbox{\rm BiCay}}
\def\mz{{\mathbb Z}}
\newcommand{\sg}[1]{\langle #1 \rangle}
\begin{document}
\title[]{Symmetric property and edge-disjoint Hamiltonian cycles of the spined cube}

\author{Da-Wei Yang$^*$}
\address{Da-Wei Yang, School of Science, Beijing University of Posts and Telecommunications, Beijing, 100876, P.R. China}
%\email{dwyang$@$bjtu.edu.cn}

\author{Zihao Xu}
\address{Zihao Xu, School of Computer Science, Beijing University of Posts and Telecommunications, Beijing, 100876, P.R. China}

\author{Yan-Quan Feng}
\address{Yan-Quan Feng, School of Mathematics and Statistics, Beijing Jiaotong University,
Beijing, 100044, P.R. China}

\author{Jaeun Lee}
\address{Jaeun Lee, Mathematics, Yeungnam University, Kyongsan 712-749, Republic of Korea}

\footnotetext[1]{Corresponding author. E-mail: dwyang@bupt.edu.cn}

\date{}
 \maketitle

\begin{abstract}
The spined cube $SQ_n$ is a variant of the hypercube $Q_n$, introduced by Zhou et al. in [Information Processing Letters 111 (2011) 561-567] as an interconnection network for parallel computing. A graph $\G$ is an $m$-Cayley graph if its automorphism group $\Aut(\G)$ has a semiregular subgroup acting on the vertex set with $m$ orbits,
and is a Caley graph if it is a 1-Cayley graph. It is well-known that $Q_n$ is a Cayley graph of an elementary abelian 2-group $\mz_2^n$ of order $2^n$. In this paper, we prove that $SQ_n$ is a 4-Cayley graph of $\mz_2^{n-2}$ when $n\geq6$, and is a $\lfloor n/2\rfloor$-Cayley graph when $n\leq 5$.
This symmetric property shows that an $n$-dimensional spined cube with $n\geq6$ can be decomposed to eight vertex-disjoint $(n-3)$-dimensional hypercubes, and as an application, it is proved that there exist two edge-disjoint Hamiltonian cycles
in $SQ_n$ when $n\geq4$.
Moreover, we determine the vertex-transitivity of $SQ_n$, and prove that $SQ_n$ is not vertex-transitive unless $n\leq3$.

\bigskip

\f {\bf Keywords:} Interconnection network, The spined cube, 4-Cayley graphs, Hamiltonian cycles.

\medskip
\f {\bf 2010 Mathematics Subject Classification:} 05C25, 20B25.
\end{abstract}

\section{Introduction}

An interconnection network, say network shortly, is the backbone of a parallel computing system, and connects the processors of the system. The computational cost of a parallel computing system is heavily dominated by the communication cost of the underlying network, which decides the overall performance of the system. This fact clearly emphasizes the significance of network topology and its efficient structural designs~\cite{ASD,X}.
The topological structure of the underlying network can be modeled as a graph where vertices correspond to processors, memory modules or switches, and edges correspond to communication links. It has been universally accepted and used by computer scientists and engineers~\cite{AC,X}.

\subsection{Symmetric property of networks}

In the design of a network, it is desirable that the designed network can provide us with high regularity and symmetry,
since it is advantageous to construction and simulation of some algorithms (see~\cite{X}). The class of vertex-transitive graphs possesses high regularity and symmetry,
and thus is an important and ideal class of topological structures of interconnection networks~\cite{AK,X,ZWYY}. A number of networks, including hypercubes~\cite{Bhu}, varietal hypercubes~\cite{WFZ}, balanced hypercubes~\cite{ZWYY}, and some of their generalizations are all vertex-transitive~\cite{X}.

A graph $\G$ is {\em vertex-transitive} if it looks the same when we take a view from every vertex~\cite{PCY}. The vertex-transitivity of graphs is usually measured by using group actions. Let $V(\G)$ and $E(\G)$ be the vertex set and edge set of $\G$, respectively. An {\em automorphism} of $\G$ is a permutation $\pi$ on $V(\G)$ satisfying the adjacency-preserving condition
$$(u, v)\in E(\G)~{\rm if\ and\ only\ if}\ (u^{\pi}, v^{\pi})\in E(\G).$$
The set of all automorphisms
of $\G$ forms a group under the operation of composition, denoted by $\Aut(\G)$,
and it is referred to as the {\em full automorphism group} of $\G$. A subgroup $G$ of $\Aut(\G)$ is {\em transitive} on $V(\G)$ if for any pair $(u,v)$ of vertices in $\G$ there is some $\pi\in G$ such that $v=u^{\pi}$.
(For a vertex $u$, the set $\{u^{\a}~|~\a\in G\}$ is an {\em orbit} of $G$ acting on $V(\G)$. The transitivity of $G$ on $V(\G)$ means that $G$
has exactly one orbit on $V(\G)$.)
A graph $\G$ is {\em vertex-transitive} if $\Aut(\G)$
is transitive on $V(\G)$.

The class of Cayley graphs presents a very useful graph-theoretic model for designing, analyzing, and improving symmetric networks~\cite{AK,he,ZWYY}. In particular, it plays an important role in constructing vertex-transitive graphs.
For a graph $\G$, a subgroup $G$ of $\Aut(\G)$ is {\em semiregular} on $V(\G)$ if evey element in $G$, except the identity, cannot fix a vertex of $\G$, and {\em regular} if $G$ is both transitive and semiregular on $V(\G)$. The graph $\G$ is a {\em Cayley graph} of a group $G$ if there exists a regular subgroup of $\Aut(\G)$ isomorphic to $G$ (see~\cite{CMY,X}).

The concept of Cayley graphs can be naturally generalized to $m$-Cayley graphs, where regular actions are replaced with semiregular actions. A graph $\G$ is said to be an {\em $m$-Cayley graph} of a group $G$ if $\Aut(\G)$ admits a semiregular subgroup isomorphic to $G$ having $m$ orbits on $V(\G)$. Of course, 1-Cayley graphs are simply Cayley graphs.
For additional results regarding $m$-Cayley graphs we refer the reader to~\cite{AT, HKM}. The class of $m$-Cayley graphs provides a useful tool to study non-vertex-transitive graphs, see~\cite{CZFZ} for example. It also has been used in the research of some networks, see~\cite{CMY,LM} for example.

The hypercube $Q_{n}$ is one of the most popular, versatile and efficient topological structures of networks~\cite{X}. Because of its many excellent features, it becomes the first choice for parallel processing and computing systems, one of which is its small diameter~\cite{Bhu,X}. Communication efficiency is a critical metric in a parallel computing system, while the diameter of a network is an important metric for communication
efficiency~\cite{ZFJZ}. A superior nature of the hypercube $Q_n$ is that its diameter is equal to its dimension $n$, which is
logarithm-level with respect to the order of $Q_n$.
To further improve the performance of the hypercube network in terms of diameter, numerous variant networks were put forward successively. The $n$-dimensional spined cube $SQ_n$ is
one variant of the hypercube, which was proposed by Zhou et al.~\cite{ZFJZ}. The diameter of $SQ_n$ is only $\lceil\frac{n}{3}\rceil+3$,
which is less than many known variants such as crossed cubes, twisted cubes, M\" obius
cubes, etc. The spined cube has attracted the attention of many researchers,
and its various properties such as embedability~\cite{ASD}, reliability~\cite{CZW}, the shortest-path routing~\cite{SKHB} have been investigated.

The symmetric property of the hypercube $Q_{n}$ have been widely investigated. It is a Cayley graph of an elementary abelian 2-group $\mz_2^n$, and consequently it is vertex-transitive. The symmetric properties of many variants of the hypercube have been studied, and however, there are also some variants whose
symmetric properties are not clear.
One may see a summary in Table~\ref{Table-1}.
The symmetric property of the spined cube is first considered in this paper. It is shown that an $n$-dimensional spined cube $STQ_n$
is a 4-Cayley graph of an elementary abelian 2-group $\mz_2^{n-2}$ when $n\geq6$,
and is a $\lfloor n/2\rfloor$-Cayley graph when $n\leq 5$.
Moreover, we determine the vertex-transitivity of $SQ_n$, and prove that $SQ_n$ is vertex-transitive only when $n\leq3$.

\begin{table}[h]

\begin{center}
\begin{tabular}{l|c|c|c|c}

\hline
Networks & Vertex-transitive & $m$-Cayley graph & $Orb$ &Reference \\
\hline
Hypercube $Q_n$ & Yes & $m=1$& 1& \cite{FZL}\\
\hline
Folded hypercube $FQ_n$ & Yes & $m=1$&1& \cite{FZL}\\
\hline
Balanced hypercube $BH_n$ & Yes & $m=1$&1& \cite{ZKFW}\\
\hline
Varietal hypercube $VQ_n$& Yes & $m=1$&1& \cite{WFZ}\\
\hline
Twisted cube $TQ_n$ & No & ? & ? &\cite{AP}\\
\hline
 Locally twisted cube $LTQ_n$ $(n\geq4)$&No & $m=2$&2&  \cite{CMY}\\
\hline
 Crossed cube $CQ_n$ $(n\geq5)$& No & ?  & ? &\cite{KB}\\
\hline
Folded crossed cube $FCQ_n$ $(n\geq5)$& No & ? & ? &\cite{PCY}\\
\hline
Twisted hypercube $H_n$ & ? & ? &?&\cite{Zhu}\\
\hline
Data center network $D_{k,n}$ $(k\geq2,n\geq2)$ &No& ? &?&\cite{Lv1}\\
\hline
Spined cube $SQ_n$ $(n\geq6)$& No & $m=4$ & ? & This paper\\
\hline
\end{tabular}
\end{center}
\vskip 0cm
\caption{{\small Summary of symmetric properties of some networks.}}\label{Table-1}
\end{table}

For a network $\G$, determining the number $Orb(\G)$ of orbits of $\Aut(\G)$
acting on vertices is an interesting problem in the study of symmetry.
This has been attracted the attention of some researchers. For example,
Pai et al.~\cite{PCY} put forward an open problem to determine the number of orbits for
the crossed cube and the folded crossed cube.
To a certain extent, determining the number $Orb(\G)$ is related to the $m$-Cayley property of networks. There is a famous conjecture that almost
all vertex-transitive graphs are Cayley graphs, see~\cite{MP} for the detail.
Moreover, the $m$-Cayley property of a network
is helpful for us to analyze its internal structure and other properties in some time. For example, the Cayley property and bi-Cayley property have been used to analyze the reliability of networks in~\cite{ZWYY} and~\cite{CMY}, respectively.
One may also see~\cite{LM} for other work.
What's more, the 4-Cayley graphic structure of $SQ_n$ with $n\geq6$ obtained in this paper will be applied to construct the edge-disjoint Hamiltonian cycles in $SQ_n$ in Section 4. These, combined with Table~\ref{Table-1}, prompt us to consider the following problem.

\begin{prob}\label{prob=1}
For some variants of hypercubes, including the crossed cube, the folded crossed cube
and the twisted cube,
\begin{enumerate}
\item [\rm (1)] determining the numbers of orbits of their automorphism groups acting on vertices;
\item [\rm (2)] determining the minimum number $m$ such that the variant network is an $m$-Cayley graph.
\end{enumerate}
\end{prob}

We note that the answers of Problem~\ref{prob=1} for the spined cube $SQ_n$
are both $4$ when $n\geq6$.
We also determine the full automorphism group of the spined cube based on its 4-Cayley property. Since the proof involves
some combinatorial group theory, it will be presented in other place.

\subsection{Edge-disjoint Hamiltonian cycles in networks}

The ring structure is important for distributed computing, and one may see~\cite{LHJ} for its benefits. The Hamiltonian cycles can provide an advantage for algorithms using ring structures~\cite{RB}. A cycle in a graph $\G$ is {\em Hamiltonian} if it contains every vertex of $\G$. Two Hamiltonian cycles in a graph are {\em edge-disjoint} if they have no common edge. For the convenience in writing, an edge-disjoint Hamiltonian cycle is abbreviated to an EDHC.
EDHCs can be applied on the problem of all-to-all communication algorithm. If a network contains $d$ EDHCs, then the time complexity of the algorithm can be improved by a factor of $d$. One may see~\cite{HBA,RB} for the detail. Moreover, EDHCs are also useful in fault-tolerant routing. A network  can tolerate a large number of edge failures if it has more EDHCs. When faults occur to edges of a Hamiltonian cycle, then vertices can communicate with any other vertices along another Hamiltonian cycle~\cite{HBA}.

How to search for EDHCs in a network is
an active and popular filed in the literature.
A number of networks having at least two EDHCs have been investigated, including the transposition network and some hypercube-like networks~\cite{Hung2}, the Eisenstein-Jacobi network~\cite{HBA}, and the balanced hypercube~\cite{LW}. For various results and constructions of EDHCs in networks, we refer the reader to \cite{AB, Hung,Hu} and all the references therein.
In the end of this paper, we aim to apply the symmetric property to the search of EDHCs in the spined cube, and finally, through using the 4-Cayley graphic structure of the spine cube and EDHCs in hypercubes, we prove that there exist two EDHCs in $SQ_n$ when $n\geq4$.

\medskip

The layout of this paper is as follows: In Section 2, some definitions and notations are introduced. We then discuss the symmetric property of the spined cube in Section 3,
and construct two EDHCs in the spined cube in Section 4. Section 5 is the conclusion.

\section{Preliminaries}

All graphs in this paper are finite, simple and undirected.

\subsection{Fundamental graph and group terminologies}

We follow~\cite{BM,CMY} for some terminology and definitions related to graphs and groups.
Some notations are listed in Table~\ref{table=2}.

\begin{table}[h]

\begin{center}
\begin{tabular}{l|l}

\hline
Notations & Meaning \\
\hline
$\G$& A graph with vertex set $V(\G)$ and edge set $E(\G)$\\
\hline
$N_{\G}(u)$ & The neighborhood of the vertex $u$ in $\G$\\
\hline
$d_{\G}(u,v)$ & The distance between the vertices $u$ and $v$ in $\G$\\
\hline
$I_m$& $\{1,2,\ldots,m\}$\\
\hline
$\mz_n$& A cyclic group of order $n$\\
\hline
$\mz_2^n$& An elementary abelian $2$-group of order $2^n$\\
\hline
$M\times N$ & The product of groups $M$ and $N$\\
\hline
$M\rtimes N$& A semi-product of groups $M$ and $N$\\
\hline
$1_H$& The identity of a group $H$\\
\hline
$\sg{a_1,\ldots,a_n}$& The group generated by $\{a_1,\ldots,a_n\}$\\
\hline
$G\cong H$& The groups $G$ and $H$ are isomorphic\\
\hline
\end{tabular}
\end{center}
\vskip 0cm
\caption{{\small Some notations.}}\label{table=2}
\end{table}

Let $\G$ be a graph, and let $F$ be a subset of $V(\G)$.
The subgraph of $\G$ induced by $F$ is the graph whose vertex set is $F$ and edge set is $\{(u, v)\in E(\G)~|~u,v\in F\}$, denoted by $\G[F]$.
The notation $\G-F$ represents the subgraph of $\G$ after deleting all vertices and edges in $\G[F]$ from $\G$.

Let $P_{1}=(x_{1},x_{2},\ldots,x_{m})$ and $P_{2}=(y_{1},y_{2},\ldots,y_{n})$ be two paths in a graph $\G$ such that all vertices in $V(P_1)\cup V(P_2)$ are all distinct except $x_{m}=y_{1}$. One can use $P_{1}+P_{2}$ to denote the {\it path-concatenation} of $P_{1}$ and $P_{2}$ as the path $(x_{1},x_{2},\ldots,x_{m},y_{2},\ldots,y_{t})$,
and use $P_1-(x_1,x_2)$ to denote the path $(x_{2},x_{3},\ldots,x_{m})$.

For two graphs $\G$ and $\Sigma$, an {\em isomorphism} from $\G$ to $\Sigma$ is a bijection $\phi: V(\G)\rightarrow V(\Sigma)$ such that $(u, v)\in E(\G)$ if and only if $(u^{\phi},v^{\phi})\in E(\Sigma)$. The graphs $\G$ and $\Sigma$ are {\em isomorphic}, write $\G\cong \Sigma$, if there is an isomorphism from $\G$ to $\Sigma$.

\subsection{The $m$-Cayley graph}

Let $G$ be a finite group, and let $S$ be a subset of $G$ such that $1_G\notin S$ and $S=S^{-1}=\{s^{-1}~|~s\in S\}$. The {\em Cayley graph}  of $G$ with respect to $S$,
 write $\Cay(G, S)$, is the graph with vertex set $G$ and edge set
$$\{(g, sg)~|~ g\in G, s\in S\}.$$
For the definition of $m$-Cayley graphs, we follow~\cite{HKM}.

\begin{defi}\label{defi=n-Cayley}
Let $H$ be a group, and let $T_{ij}$ be a subset of $H$ such that $T_{ij}^{-1}=T_{ji}$ and $1_H\notin T_{ii}$, where $i,j\in I_m$.
The {\em $m$-Cayley graph} of $H$ relative to the subsets $T_{ij}$s is the graph having vertex set $\{h_i : h\in H, i\in I_m\}$, and
the vertex $h_i$ is adjacent to $g_j$ if and only if there exists an element $t\in T_{ij}$ such that $g_j=(th)_j$.
\end{defi}

Clearly, a 1-Cayley graph is just a Cayley graph, and a 2-Cayley graph is also called a {\em bi-Cayley graphs}.
A bi-Cayley graph of a group $H$ relative to the subsets $T_{11},T_{22},T_{12}$ is often denoted by $\BiCay(H,T_{11},T_{22},T_{12})$.
For an $m$-Cayley graph $\G$ of a group $H$ relative to the subsets $T_{ij}$s, where $T_{ij}\subset H$ and $i,j\in I_m$, it follows from Definition~\ref{defi=n-Cayley} that the induced subgraph
by $H_i$ in $\G$ is isomorphic to the Cayley graph $\Cay(H,T_{ii})$.

\subsection{The hypercube and the spined cube}

Let $n$ be a positive integer. An {\it $n$-dimensional hypercube $Q_n$} is a graph with $2^{n}$ vertices. Each vertex is labeled with an $n$-bit binary string $x_{1}x_{2}\cdots x_{n-1}x_{n}$, where $x_i=0$ or $1$ for each $1\leq i\leq n$, and two vertices are adjacent if they have exactly one bit distinct.
The following proposition about $Q_n$ is well-known and can be also checked easily.

\begin{prop}\label{prop=2.3-1}
Let $n\geq3$ be an integer, and let $\{a_1,\ldots,a_n\}$ be a generating subset of $\mz_2^n$. Then $Q_n\cong \Cay(\mz_2^n,\{a_1,a_2,\ldots,a_n\})$.
\end{prop}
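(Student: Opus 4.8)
The plan is to reduce the general statement to the case of the standard basis and then transport the resulting isomorphism by a group automorphism. First I would observe that, since $\mz_2^n$ is a vector space of dimension $n$ over the field $\mz_2$, any generating set of cardinality $n$ is automatically a basis: $n$ vectors spanning an $n$-dimensional space must be linearly independent. In particular each $a_i$ is nonzero, so $1_G\notin\{a_1,\ldots,a_n\}$, and since every element of $\mz_2^n$ is an involution we have $S=S^{-1}$ for $S=\{a_1,\ldots,a_n\}$. Hence $\{a_1,\ldots,a_n\}$ is a legitimate connection set and $\Cay(\mz_2^n,\{a_1,\ldots,a_n\})$ is well-defined.

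Next I would treat the standard basis $\{e_1,\ldots,e_n\}$, where $e_i$ is the vector with a $1$ in coordinate $i$ and $0$ elsewhere. Identifying each vertex of $Q_n$ (an $n$-bit string) with the corresponding element of $\mz_2^n$, I claim $Q_n=\Cay(\mz_2^n,\{e_1,\ldots,e_n\})$ on the nose. Indeed, two vertices are adjacent in $Q_n$ exactly when they differ in a single coordinate, which happens precisely when their difference — equivalently their sum, since we are in characteristic $2$ — equals some $e_i$; this is exactly the adjacency rule of the Cayley graph with connection set $\{e_1,\ldots,e_n\}$. This step is a direct unwinding of the two definitions.

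Finally I would pass from the standard basis to an arbitrary basis via an automorphism of $\mz_2^n$. Since both $\{e_1,\ldots,e_n\}$ and $\{a_1,\ldots,a_n\}$ are bases, there is a linear map $\phi\in\mathrm{GL}_n(\mz_2)$, that is, a group automorphism of $\mz_2^n$, with $\phi(e_i)=a_i$ for all $i$. I would then invoke the general principle that any group automorphism $\phi$ of a group $G$ yields a graph isomorphism $\Cay(G,S)\to\Cay(G,\phi(S))$ via $g\mapsto\phi(g)$: an edge $(g,sg)$ is carried to $(\phi(g),\phi(s)\phi(g))$, which is an edge of $\Cay(G,\phi(S))$ because $\phi(s)\in\phi(S)$, and the argument reverses using $\phi^{-1}$. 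Applying this with $G=\mz_2^n$, $S=\{e_1,\ldots,e_n\}$ and $\phi(S)=\{a_1,\ldots,a_n\}$ gives $\Cay(\mz_2^n,\{e_1,\ldots,e_n\})\cong\Cay(\mz_2^n,\{a_1,\ldots,a_n\})$, and combining with the previous step yields $Q_n\cong\Cay(\mz_2^n,\{a_1,\ldots,a_n\})$.

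Since each step is an immediate consequence of the relevant definitions, there is no genuine obstacle here; the only point deserving a little care is the opening observation that a generating set of cardinality equal to the dimension is forced to be a basis, as this is precisely what guarantees the existence of the automorphism $\phi$ used in the last step.
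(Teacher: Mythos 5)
Your proof is correct. The paper offers no proof of this proposition---it is introduced with the remark that it ``is well-known and can be also checked easily''---and your argument (an $n$-element generating set of $\mz_2^n$ is necessarily a basis; $Q_n$ coincides with $\Cay(\mz_2^n,\{e_1,\ldots,e_n\})$ for the standard basis directly from the two definitions; a group automorphism carrying $e_i$ to $a_i$ induces a graph isomorphism of the corresponding Cayley graphs) is exactly the routine verification the authors have in mind, including the necessary checks that the connection set omits the identity and is closed under inverses.
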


The spined cube were defined by Zhou et al.~\cite{ZFJZ} in the following way.

\begin{defi}\label{defi=1}
Let $n$ be a positive integer. An $n$-dimensional
spined cube, denoted by $SQ_n$, is defined recursively as follows:
\begin{enumerate}
\item [\rm (1)] $SQ_1$ is a complete graph on
two vertices $0$ and $1$.
\item [\rm (2)] For $n\geq 2$, $SQ_{n}$ consists of two copies of $SQ_{n-1}$, denoted by $0SQ_{n-1}$ and $1SQ_{n-1}$. Each vertex
$x=0x_2\cdots x_n$ in $0SQ_{n-1}$ connects exactly one vertex $x'$ in $1SQ_{n-1}$, where
\begin{enumerate}
\item [\rm (2.1)] $x'=1x_2$ for $n=2$;
\item [\rm (2.2)] $x'=1((x_2+x_n)~(\mod 2))x_3\cdots x_n$ for $n=3$ or $4$;
\item [\rm (2.3)] $x'=1((x_2+x_{n-1})~(\mod 2))((x_3+x_n)~(\mod2))x_4\cdots x_n$ for $n\geq5$.
\end{enumerate}
 \end{enumerate}
\end{defi}

By Definition~\ref{defi=1}, $SQ_2$ is a 4-cycle, that is, a cycle of length 4.
 %The spined cubes $SQ_3$ and $SQ_4$ are illustrated in Figure~\ref{fig=1}.
For notation convenience, ``$({\rm mod}~2)$" will not appear in similar expressions in the rest of the paper.

%\begin{figure}[ht]
%\begin{center}
%\includegraphics[height=5.2cm,width=13cm]{fig1.eps}
%\end{center}
%\vskip-0.5cm
%\caption{The spined cubes $SQ_3$ and $SQ_4$.}\label{fig=1}
%\end{figure}

An equivalent definition of the spined cube can be obtained easily from Definition~\ref{defi=1}, as follows.

\begin{defi}\label{defi=2}  An {\it $n$-dimensional spined cube $SQ_n$} is an undirected graph with $2^{n}$ vertices with addresses $x_{1}\cdots x_{n}$, where $x_i=0$ or $1$ for each $1\leq i\leq n$. Two vertices $x=x_{1}\cdots x_{n}$ and $y$ are adjacent if and only if one of the following conditions is satisfied:

\begin{enumerate}
\item [\rm (1)]$y=(1+x_1)$ with $n=1$;
\item [\rm (2)] $y=(1+x_{1})x_2$ or $x_1(1+x_2)$ with $n=2$;
\item [\rm (3)] $y\in\{(1+x_1)(x_2+x_3)x_3,x_1(1+x_{2})x_3,x_1x_2(1+x_{3})\}$ with $n=3$;
\item [\rm (4)] $y\in\{(1+x_{1})(x_{2}+x_4)x_{3}x_4,x_1(1+x_{2})(x_{3}+x_4)x_4,x_1x_{2}(1+x_{3})x_4,
    x_1x_{2}x_{3}(1+x_4)\}$ with $n=4$;
\item [\rm (5)] $y\in\{(1+x_1)(x_2+x_{4})(x_3+x_5)x_4 x_5,x_1(1+x_{2})(x_{3}+x_5)x_{4}x_5,x_1x_{2}(1+x_{3})(x_4+x_5)x_5,x_1x_{2}x_{3}(1+x_4)x_5,x_1x_{2}x_{3}x_4(1+x_5)\}$ with $n=5$;
\item [\rm (6)] $y\in\{x_1\cdots x_{n-1}(1+x_{n}), x_1\cdots x_{n-2}(1+x_{n-1})x_n,
x_1\cdots x_{n-3}(1+x_{n-2})(x_{n-1}+x_n)x_n, x_1\cdots x_{n-4} (1+x_{n-3})(x_{n-2}+x_n)x_{n-1}x_n,  x_1\cdots x_{k-1}(1+x_k)(x_{k+1}+x_{n-1})(x_{k+2}+x_{n})x_{k+3}\cdots x_n,(1+x_1)(x_2+x_{n-1})(x_3+x_n)x_4\cdots x_n~|~2\leq k\leq n-4\}$ with $n\geq6$.
\end{enumerate}

\end{defi}

\section{Symmetric property of the spined cube}

This section is divided into two parts, in which
the $m$-Cayley property and the vertex-transitivity of the spined cube $SQ_n$ are considered, respectively. Since $SQ_2$ is a 4-cycle and this case is trivial, we always assume $n\geq3$ in this section.

\subsection{$m$-Cayley property of $SQ_n$}

For the case of $n\leq 5$, we first introduce a Cayley graph and two bi-Cayley graphs.

\begin{defi}\label{defi=3}
Let $H=\sg{a_1}\times \sg{a_2}\times \sg{a_{3}}\cong\mz_2^{3}$, $G=\sg{a}\cong\mz_8$, and $K=\sg{b_1,b_2,b_3,b_4~|~b_1^2=b_2^2=b_3^2=b_4^2=[b_1,b_2]=[b_1,b_3]=
[b_2,b_3]=[b_2,b_4]=[b_3,b_4]=1_K,b_4b_1=b_1b_3b_4}\cong\mz_2^3\rtimes \mz_2$ a non-abelian group of order $16$. Define a Cayley graph $\G_3$ and two bi-Cayley graphs $\G_4,\ \G_5$ as follows:
$$\begin{array}{ll}
&\G_{3}=\Cay(G,\{a,a^{-1},a^4\});\\
&\G_4=\BiCay(H,\{a_1,a_2,a_3\},\{a_1a_2,a_2a_3,a_3\},\{1_H\});\\
&\G_5=\BiCay(K,\{b_1,b_2,b_3,b_4\},\{b_1b_2,b_2b_3,b_2b_4,b_4\},\{1_K\}).
\end{array}$$

\end{defi}

%\begin{figure}[ht]
%\begin{center}
%\includegraphics[height=5.2cm,width=14cm]{fig2.eps}
%\end{center}
%\vskip-0.5cm
%\caption{The graphs $\G_3$ and $\G_4$.}\label{fig=2}
%\end{figure}

Clearly, $\G_3$ is vertex-transitive, as it is a Cayley graph.
 %The graphs $\G_3$ and $\G_4$ are illustrated in Figure~\ref{fig=2}. Combined with Figure~\ref{fig=1}, one can see that $SQ_3\cong\G_3$. 
It can be checked easily that $SQ_3\cong\G_3$. Note that a 3-dimensional spined cube $SQ_3$ is also called a 3-dimensional locally twisted cube (see~\cite[Figure~2]{CMY}).
For $\G_4$ and $\G_5$, it can be easily checked by using the software {\sc Magma}~\cite{BCP} that $SQ_4\cong\G_4$ and $SQ_5\cong \G_5$. Moreover, neither of them is vertex-transitive.

\begin{lem}\label{lem=1}
The following hold.
\begin{enumerate}
\item [\rm (1)] $SQ_3$ is vertex-transitive and is a Cayley graph of a cyclic group $\mz_8$.
\item [\rm (2)] $SQ_4$ is a bi-Cayley graph of an elementary abelian group $\mz_2^3$, and is not vertex-transitive.
\item [\rm (3)] $SQ_5$ is a bi-Cayley graph of a non-abelian group $\mz_2^3\rtimes \mz_2$, and is not vertex-transitive.
\end{enumerate}
\end{lem}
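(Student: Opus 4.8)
The plan is to reduce all three parts to the three isomorphisms $SQ_3\cong\G_3$, $SQ_4\cong\G_4$ and $SQ_5\cong\G_5$ together with the defining data in Definition~\ref{defi=3}, after which the group-theoretic conclusions become formal. I would invoke two standard facts. First, for any group $G$ and any admissible connection set $S$, the Cayley graph $\Cay(G,S)$ is vertex-transitive, because the right-regular representation $x\mapsto xg$ embeds $G$ as a \emph{regular} (hence transitive) subgroup of $\Aut(\Cay(G,S))$. Second, for a bi-Cayley graph $\BiCay(H,T_{11},T_{22},T_{12})$ the right-regular maps $h_i\mapsto(hx)_i$, for $x\in H$, preserve every adjacency condition $gh^{-1}\in T_{ij}$ of Definition~\ref{defi=n-Cayley}, so they embed $H$ as a \emph{semiregular} subgroup of the automorphism group whose orbits are exactly the two copies $H_1,H_2$; thus such a graph really is a bi-Cayley graph of $H$.

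For part (1) I would fix an explicit bijection $V(SQ_3)\to\mz_8$ and check, against Definition~\ref{defi=2}(3), that adjacency matches the connection set $\{a,a^{-1},a^4\}$; equivalently, that $SQ_3$ is the circulant $\Cay(\mz_8,\{\pm1,4\})$ (the Wagner graph), which is the shortest route since it is $3$-regular, triangle-free and non-bipartite on $8$ vertices. This gives $SQ_3\cong\G_3$, and the previous paragraph then yields both that $SQ_3$ is a Cayley graph of $\mz_8$ and that it is vertex-transitive. For the first assertions of (2) and (3) I would confirm $SQ_4\cong\G_4$ and $SQ_5\cong\G_5$ by matching degrees and adjacencies against Definition~\ref{defi=2}(4),(5)---most cleanly with {\sc Magma}~\cite{BCP}---whence $SQ_4$ is a bi-Cayley graph of $H\cong\mz_2^3$ and $SQ_5$ is a bi-Cayley graph of $K\cong\mz_2^3\rtimes\mz_2$.

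The remaining and genuinely nontrivial content is the non-vertex-transitivity in (2) and (3), and I expect this to be the main obstacle. It does not follow from the bi-Cayley description, which only supplies a semiregular subgroup with two orbits and says nothing about whether the \emph{full} group $\Aut(SQ_n)$ fuses them. Moreover it resists the cheapest combinatorial attacks: $SQ_4$ is a connected $4$-regular, triangle-free graph in which the local invariants one would first try---vertex degrees, the number of short cycles through a vertex, and the distance distribution---are all constant over the vertices, so none of them gives an obstruction. The clean resolution is to compute $\Aut(SQ_4)$ and $\Aut(SQ_5)$ directly and read off that each has at least two orbits on its vertex set, which is precisely what {\sc Magma} certifies and which handles both cases in one computation. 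If a non-computational argument were desired, one could instead separate the two orbits by a finer invariant, such as the number of $5$-cycles through a vertex, exploiting the fact (visible from Definition~\ref{defi=2}) that $SQ_4$ and $SQ_5$ are not bipartite.
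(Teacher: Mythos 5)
Your proposal is correct and takes essentially the same route as the paper: the paper likewise proves the lemma by checking $SQ_3\cong\G_3$ directly (with vertex-transitivity following since $\G_3$ is a Cayley graph) and by verifying $SQ_4\cong\G_4$, $SQ_5\cong\G_5$ together with their non-vertex-transitivity via a {\sc Magma} computation. Your extra observations---that the bi-Cayley structure by itself cannot rule out vertex-transitivity, and that a finer invariant such as $5$-cycle counts could replace the computer check---are sound refinements of that same argument.
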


Now, we turn to the case $n\geq6$, and we describe a family of 4-Cayley graphs of
the elementary abelian $2$-groups.

\begin{defi}\label{defi=4}
Let $n\geq6$ be an integer, and let $H=\sg{a_1}\times \sg{a_2}\times\cdots \times\sg{a_{n-2}}\cong\mz_2^{n-2}$.
Set
$$\begin{array}{ll}
&T_{11}=\{a_1,a_2,\ldots, a_{n-2}\};~T_{22}=\{a_1a_3,a_2a_4,\ldots,a_{n-4}a_{n-2},a_{n-3}a_{n-2}\};\\
&T_{33}=\{a_1a_2a_3,a_2a_3a_4,\ldots,a_{n-4}a_{n-3}a_{n-2},a_{n-3}a_{n-2}\};\\
&T_{44}=\{a_1a_2,a_2a_3,\ldots, a_{n-4}a_{n-3},a_{n-3},a_{n-2}\};\\
&T_{12}=T_{14}=T_{34}=\{1_H\};
~T_{23}=\{1_H,a_{n-2}\};~T_{13}=T_{24}=\emptyset.
\end{array}$$
Define
a $4$-Cayley graph of $H$ relative to $T_{ij}$s, and denote it by $\G_{n}$.
\end{defi}

By Definition~\ref{defi=n-Cayley}, the 4-Cayley graph $\G_{n}$
has order $2^{n}$ and valency $n$. The vertex set $V(\G_{n})$ is $\bigcup_{i=1}^4H_i$, where $H_i=\{h_i~|~h\in H\}$ for each $i\in I_4$.
The neighborhood of a vertex $h_i$ in $\G_n$ for each $h\in H$ and $i\in I_4$ are
\begin{eqnarray}
N_{\G_n}(h_1)&=&\{h_2,\ h_4,\ (a_kh)_1~|~1\leq k\leq n-2\};
\label{eq=1} \\
N_{\G_n}(h_2)&=&\{h_1,\ h_3,\ (a_{n-2}h)_3,~(a_{n-3}a_{n-2}h)_2,\ (a_ka_{k+2}h)_2 ~|~1\leq k\leq n-4\};\label{eq=2}\\
N_{\G_n}(h_3)&=&\{h_4,\ h_2,\ (a_{n-2}h)_2,\ (a_{n-3}a_{n-2}h)_3,\ (a_ka_{k+1}a_{k+2}h)_3~|~1\leq k\leq n-4\};\label{eq=3}\\
N_{\G_n}(h_4)&=&\{h_1,\ h_3,\ (a_{n-2}h)_4,\ (a_{n-3}h)_4,\ (a_ka_{k+1}h)_4~|~1\leq k\leq n-4\}.\label{eq=4}
\end{eqnarray}

We note that the induced subgraphs by $H_1$ and $H_4$ in $\G_n$ are Cayley graphs of $H\cong\mz_2^{n-2}$ with respect to $T_{11}$ and $T_{44}$, respectively (see Definition~\ref{defi=n-Cayley}). Since $H=\langle T_{11}\rangle=\langle T_{44}\rangle$, the Proposition~\ref{prop=2.3-1} implies that the two induced subgraphs are isomorphic to $Q_{n-2}$. For $i=2$ or 3, the induced subgraph by $H_i$ in $\G_n$ consists of two components, and each of them is isomorphic to $Q_{n-3}$.
Therefore, there are eight vertex-disjoint $(n-3)$-dimensional hypercubes in $\G_n$.
This observation will be also put in the end of Section 3.1 and used in Section 4.
%An illustration of the graph $\G_n$ is given in Figure~\ref{fig=3}.

%\begin{figure}[ht]
%\begin{center}
%\includegraphics[height=7cm,width=11cm]{fig4.eps}
%\end{center}
%\vskip-0.5cm
%\caption{An illustration of the 4-Cayley graph $\G_n$}\label{fig=3}
%\end{figure}

In the following, we will prove that a spined cube $SQ_n$ with $n\geq6$ is isomorphic to $\G_n$. The following well-known fact relative to elementary abelian groups will be frequently used in the later proof.

\medskip

\f {\bf Fact} {\em For any $1\leq i, j\leq n-2$, $a_i^0=a_i^2=1_H$ and $a_ia_j=a_ja_i$. Every element in $H$ can be uniquely written as $a_1^{x_1}a_2^{x_2}\cdots a_{n-2}^{x_{n-2}}$, where $x_i\in\{0,1\}$ for $1\leq i\leq n-2$.}

\begin{lem}\label{lem=2}
For $n\geq6$, we have $SQ_n\cong \G_{n}$.
\end{lem}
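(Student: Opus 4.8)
The plan is to construct an explicit bijection $\phi\colon V(SQ_n)\to V(\G_n)$ and to check that it preserves adjacency. Both graphs have $2^n$ vertices and are $n$-regular, so they carry the same number of edges; consequently, once $\phi$ is a bijection that sends every edge of $SQ_n$ to an edge of $\G_n$, it is automatically an isomorphism, and it suffices to verify that each of the $n$ neighbour-types of a vertex listed in Definition~\ref{defi=2}(6) is carried to one of the adjacencies recorded in \eqref{eq=1}--\eqref{eq=4}. So the entire argument splits into writing down $\phi$ and then doing a finite, edge-type-by-edge-type check.

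To define $\phi$, I would let the last two coordinates $(x_{n-1},x_n)$ select the copy index and let the first $n-2$ coordinates encode an element of $H\cong\mz_2^{n-2}$. The choice of copy assignment is dictated by matching $4$-cycles: the edges of $SQ_n$ that act only on the last two bits (flipping $x_n$, flipping $x_{n-1}$, and the twist $x_{n-1}\mapsto x_{n-1}+x_n$ coming from the type-3 neighbour) form a $4$-cycle on the four values of $(x_{n-1},x_n)$, and this must be matched with the $4$-cycle $1$--$2$--$3$--$4$--$1$ on the copies induced by the nonempty inter-copy subsets $T_{12},T_{23},T_{34},T_{14}$. This forces a Gray-code correspondence, and it turns out that the naive identity encoding on the remaining bits already works; explicitly,
$$
\phi(x_1x_2\cdots x_n)=\bigl(a_1^{x_1}a_2^{x_2}\cdots a_{n-2}^{x_{n-2}}\bigr)_{c(x_{n-1},x_n)},\qquad
c(0,0)=1,\ c(0,1)=2,\ c(1,1)=3,\ c(1,0)=4.
$$
This $\phi$ is visibly a bijection by the Fact, since $x_1\cdots x_{n-2}\mapsto a_1^{x_1}\cdots a_{n-2}^{x_{n-2}}$ is a bijection onto $H$ and $c$ is a bijection onto $I_4$.

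For the verification I would organize the $n$ neighbour-types of Definition~\ref{defi=2}(6) according to the copy in which the source vertex lies, and use the fact that inside a fixed copy the values of $x_{n-1},x_n$ are constant, so the twist terms $(x_{k+1}+x_{n-1})$ and $(x_{k+2}+x_n)$ either freeze or flip. The effect is that the spined-cube edge which flips $x_k$ becomes a flip of a prescribed set of bits, hence multiplication of $h$ by a prescribed element of $H$: one checks that these elements are exactly the generators of $T_{11}$ in copy $1$, of $T_{44}$ in copy $4$, of $T_{22}$ in copy $2$, and of $T_{33}$ in copy $3$ (namely $a_k$, $a_ka_{k+1}$, $a_ka_{k+2}$, $a_ka_{k+1}a_{k+2}$, respectively), so that the intra-copy edges reproduce precisely the Cayley adjacencies $(a\cdot h)_i$ in \eqref{eq=1}--\eqref{eq=4}. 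Likewise, the two ``short'' edges (the flips of $x_n$ and $x_{n-1}$) move the source to the neighbouring copies with the $H$-part unchanged, reproducing the inter-copy edges coming from $T_{12}=T_{14}=T_{34}=\{1_H\}$ and the plain $1_H$-edge of $T_{23}$.

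The one genuinely delicate point---and the step I expect to require the most care---is the type-3 neighbour for vertices whose copy is $2$ or $3$, i.e. with $(x_{n-1},x_n)\in\{(0,1),(1,1)\}$. There the flip of $x_{n-2}$ is accompanied by a change of $x_{n-1}$ (because $x_{n-1}\mapsto x_{n-1}+x_n$ with $x_n=1$), so the edge simultaneously switches between copies $2$ and $3$ and multiplies the $H$-part by $a_{n-2}$; this is exactly the extra element in $T_{23}=\{1_H,a_{n-2}\}$, realizing the adjacency $h_2\sim(a_{n-2}h)_3$ in \eqref{eq=2} (equivalently $h_3\sim(a_{n-2}h)_2$ in \eqref{eq=3}). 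Apart from this twisted inter-copy edge, no conceptual obstruction remains; the bulk of the work is the bookkeeping needed to confirm, copy by copy, that the $n$ images of each vertex are distinct and coincide with the $n$ neighbours prescribed by \eqref{eq=1}--\eqref{eq=4}, after which the regularity remark above completes the proof.
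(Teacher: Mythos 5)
Your proposal is correct and follows essentially the same route as the paper's proof: the same bijection $\phi$ (with the Gray-code copy assignment $00\mapsto1$, $01\mapsto2$, $11\mapsto3$, $10\mapsto4$), the same reduction via equal vertex counts and equal valency to checking edge-preservation in one direction only, and the same case analysis organized by the value of $x_{n-1}x_n$, including the correct identification of the twisted $a_{n-2}$-edge between copies $2$ and $3$ with the extra element of $T_{23}$. The paper simply writes out the four neighborhood computations explicitly where you describe the general pattern, but the content is identical.
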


\begin{proof} Define a map from $V(SQ_n)$ to $V(\G_{n})$ as following:
$$\begin{array}{ll}
\phi:&x_1\cdots x_{n-2}00\mapsto (a_1^{x_1}\cdots a_{n-2}^{x_{n-2}})_1,\\
&x_1\cdots x_{n-2}01\mapsto (a_1^{x_1}\cdots a_{n-2}^{x_{n-2}})_2,\\
&x_1\cdots x_{n-2}11\mapsto (a_1^{x_1}\cdots a_{n-2}^{x_{n-2}})_3,\\
&x_1\cdots x_{n-2}10\mapsto (a_1^{x_1}\cdots a_{n-2}^{x_{n-2}})_4.
\end{array}$$
where $x_i\in\{0, 1\}$ for $1\leq i\leq n-2$. By the Fact above, it can be checked easily that $\phi$ is a bijection. To show that $\phi$ is an isomorphism
from $SQ_n$ to $\G_{n}$, we need to show that $(x, y)\in E(SQ_n)$
if and only if $(x^{\phi}, y^{\phi})\in E(\G_{n})$. Since
$SQ_n$ and $\G_{n}$ have some valency, they have the same number of edges,
and since $\phi$ is a bijection, to finish the proof, it
suffices to show that $[N_{SQ_n}(x)]^{\phi}=N_{\G_n}(x^{\phi})$ for any
$x=x_1\cdots x_n\in V(SQ_n)$. We consider the following
four cases depending on $x_{n-1}x_n=00$, $01$, $11$ or $10$.

\medskip

\f {\bf Case~1:} $x_{n-1}x_n=00$, that is, $x=x_1\cdots x_{n-2}00$.

By Definition~\ref{defi=2}, the neighborhood of $x$ in $SQ_n$ is
$$
N_{SQ_n}(x)=\{ x_1\cdots x_{n-2}01,x_1\cdots x_{n-2}10,x_1\cdots x_{k-1}(1+x_k)x_{k+1}\cdots x_{n-2}00,
(1+x_1)x_2\cdots x_{n-2}00~|~2\leq k\leq n-2\}.
$$
Since $x^{\phi}=(a_1^{x_1}\cdots a_{n-2}^{x_{n-2}})_1$, Eq.~(\ref{eq=1}) implies that the neighborhood of $x^{\phi}$ in $\G_{n}$ is
$$\begin{array}{ll}
N_{\G_{n}}(x^{\phi})&=\{(a_1^{x_1}\cdots a_{n-2}^{x_{n-2}})_2,~
(a_1^{x_1}\cdots a_{n-2}^{x_{n-2}})_4,~
(a_k\cdot a_1^{x_1}\cdots a_{n-1}^{x_{n-1}})_1~|~1\leq k\leq n-2\}\\
&=\{(a_1^{x_1}\cdots a_{n-2}^{x_{n-2}})_2,~
(a_1^{x_1}\cdots a_{n-2}^{x_{n-2}})_4,~
(a_1^{x_1}\cdots a_{k-1}^{x_{k-1}}a_k^{1+x_k}a_{k+1}^{x_{k+1}}\cdots a_{n-2}^{x_{n-2}})_1,~\\
&~~~~~~(a_1^{x_1+1}a_{2}^{x_{2}}\cdots a_{n-2}^{x_{n-2}})_1~|~2\leq k\leq n-2\}.
\end{array}$$
An easy checking yields that $[N_{SQ_n}(x)]^{\phi}=N_{\G_n}(x^{\phi})$,
as required.

\medskip

\f {\bf Case~2:} $x_{n-1}x_n=01$, that is, $x=x_1\cdots x_{n-2}01$.

In this case, the neighborhood of $x$ in $SQ_n$ is
$$\begin{array}{ll}
N_{SQ_n}(x)=\{&x_1\cdots x_{n-2}00,~x_1\cdots x_{n-2}11,~x_1\cdots x_{n-3}(1+x_{n-2})11,\\
&x_1\cdots x_{n-4}(1+x_{n-3})(1+x_{n-2})01,~(1+x_1)x_2(1+x_3)x_4\cdots
x_{n-2}01,\\
&x_1\cdots x_{k-1}(1+x_k)x_{k+1}(1+x_{k+2})x_{k+3}\cdots x_{n-2}01~|~2\leq k\leq n-4\},
\end{array}$$
and from Eq.~(\ref{eq=2}), the neighborhood of $x^{\phi}=(a_1^{x_1}\cdots a_{n-2}^{x_{n-2}})_2$ in $\G_{n}$ is
$$\begin{array}{ll}
N_{\G_{n}}(x^{\phi})=\{&(a_1^{x_1}\cdots a_{n-2}^{x_{n-2}})_1,~
(a_1^{x_1}\cdots a_{n-2}^{x_{n-2}})_3,~(a_1^{x_1}\cdots a_{n-3}^{x_{n-3}}a_{n-2}^{1+x_{n-2}})_3,\\
&(a_1^{x_1}\cdots a_{n-4}^{x_{n-4}}a_{n-3}^{1+x_{n-3}}a_{n-2}^{1+x_{n-2}})_2,~
(a_{1}^{1+x_1}a_2^{x_2}a_3^{1+x_3}a_4^{x_4}\cdots
a_n^{x_{n-2}})_2,\\
&(a_1^{x_{1}}\cdots a_{k-1}^{x_{k-1}}a_{k}^{1+x_k}a_{k+1}^{x_{k+1}}a_{k+2}^{1+x_{k+2}}
a_{k+3}^{x_{k+3}}\cdots a_{n-2}^{x_{n-2}})_2~|~2\leq k\leq n-4\}.
\end{array}$$
Again, by an easy checking, we have $[N_{SQ_n}(x)]^{\phi}=N_{\G_n}(x^{\phi})$,
as required.

\medskip

\f {\bf Case~3:} $x_{n-1}x_n=11$, that is, $x=x_1\cdots x_{n-2}11$.

In this case, the neighborhood of $x$ in $SQ_n$ is
$$\begin{array}{ll}
N_{SQ_n}(x)=\{&x_1\cdots x_{n-2}10,~x_1\cdots x_{n-2}01,~x_1\cdots x_{n-3}(1+x_{n-2})01,\\
&x_1\cdots x_{n-4}(1+x_{n-3})(1+x_{n-2})11,~(1+x_1)(1+x_2)(1+x_3)x_4\cdots
x_{n-2}11,\\
&x_1\cdots x_{k-1}(1+x_k)(1+x_{k+1})(1+x_{k+2})x_{k+3}\cdots x_{n-2}01~|~2\leq k\leq n-4\},
\end{array}$$
and from Eq.~(\ref{eq=3}), the neighborhood of $x^{\phi}=(a_1^{x_1}\cdots a_{n-2}^{x_{n-2}})_3$ in $\G_{n}$ is
$$\begin{array}{ll}
N_{\G_{n}}(x^{\phi})=\{&(a_1^{x_1}\cdots a_{n-2}^{x_{n-2}})_4,~
(a_1^{x_1}\cdots a_{n-2}^{x_{n-2}})_2,~(a_1^{x_1}\cdots a_{n-3}^{x_{n-3}}a_{n-2}^{1+x_{n-2}})_2,\\
&(a_1^{x_1}\cdots a_{n-4}^{x_{n-4}}a_{n-3}^{1+x_{n-3}}a_{n-2}^{1+x_{n-2}})_3,~
(a_{1}^{1+x_1}a_2^{1+x_2}a_3^{1+x_3}a_4^{x_4}\cdots
a_n^{x_{n-2}})_3,\\
&(a_1^{x_{1}}\cdots a_{k-1}^{x_{k-1}}a_{k}^{1+x_k}a_{k+1}^{1+x_{k+1}}a_{k+2}^{1+x_{k+2}}
a_{k+3}^{x_{k+3}}\cdots a_{n-2}^{x_{n-2}})_3~|~2\leq k\leq n-4\}.
\end{array}$$
Hence $[N_{SQ_n}(x)]^{\phi}=N_{\G_n}(x^{\phi})$,
as required.

\medskip

\f {\bf Case~4:} $x_{n-1}x_n=10$, that is, $x=x_1\cdots x_{n-2}10$.

In this case, the neighborhood of $x$ in $SQ_n$ is
$$\begin{array}{ll}
N_{SQ_n}(x)=\{&x_1\cdots x_{n-2}11,~x_1\cdots x_{n-2}00,~x_1\cdots x_{n-3}(1+x_{n-2})10,
x_1\cdots x_{n-4}(1+x_{n-3})x_{n-2}10,\\
&(1+x_1)(1+x_2)x_3\cdots
x_{n-2}10,x_1\cdots x_{k-1}(1+x_k)(1+x_{k+1})x_{k+2}\cdots x_{n-2}10~|~2\leq k\leq n-4\},
\end{array}$$
and from Eq.~(\ref{eq=2}), the neighborhood of $x^{\phi}=(a_1^{x_1}\cdots a_{n-2}^{x_{n-2}})_4$ in $\G_{n}$ is
$$\begin{array}{ll}
N_{\G_{n}}(x^{\phi})=\{&(a_1^{x_1}\cdots a_{n-2}^{x_{n-2}})_1,~
(a_1^{x_1}\cdots a_{n-2}^{x_{n-2}})_3,~(a_1^{x_1}\cdots a_{n-3}^{x_{n-3}}a_{n-2}^{1+x_{n-2}})_4,(a_1^{x_1}\cdots a_{n-4}^{x_{n-4}}a_{n-3}^{1+x_{n-3}}a_{n-2}^{x_{n-2}})_4,\\
&(a_{1}^{1+x_1}a_2^{1+x_2}a_3^{x_3}\cdots
a_n^{x_{n-2}})_4,(a_1^{x_{1}}\cdots a_{k-1}^{x_{k-1}}a_{k}^{1+x_k}a_{k+1}^{1+x_{k+1}}a_{k+2}^{x_{k+2}}\cdots a_{n-2}^{x_{n-2}})_4~|~2\leq k\leq n-4\}.
\end{array}$$
Hence $[N_{SQ_n}(x)]^{\phi}=N_{\G_n}(x^{\phi})$,
as required.\end{proof}

In $SQ_n$ with $n\geq6$, a vertex $x=x_1\ldots x_{n-2}x_{n-1}x_n$ is said to be of {\em $x_{n-2}x_{n-1}x_{n}$-type}. In view of the proof of Lemma~\ref{lem=2}, we have the following corollary.

\begin{cor}\label{cor=1}
Let $n\geq6$. An $n$-dimensional spined cube $SQ_n$ can be decomposed to eight hypercubes $Q_{n-3}$s of dimension $n-3$ and three perfect matchings. Furthermore,
\begin{enumerate}
\item [\rm (1)] the subgraph induced by the vertices of type $000$ and $100$
$($or $010$ and $110)$ is isomorphic to $Q_{n-2}$;
\item [\rm (2)] the subgraphs induced by the vertices of type $000$, $100$, $001$,
$101$, $011$, $111$, $010$, and $110$ are all isomorphic to $Q_{n-3}$.
\end{enumerate}
\end{cor}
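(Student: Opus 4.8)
The plan is to transport the whole statement through the isomorphism $\phi\colon SQ_n\to\G_n$ of Lemma~\ref{lem=2}, since $\G_n$ is assembled directly from $H\cong\mz_2^{n-2}$ and the connection sets $T_{ij}$. Under $\phi$ a vertex of type $x_{n-2}x_{n-1}x_n$ lands in the part $H_i$ determined by $x_{n-1}x_n$, with $a_{n-2}$-exponent equal to $x_{n-2}$; hence each $H_i$ is the union of the two types sharing its pair $x_{n-1}x_n$. So everything reduces to analysing the four induced Cayley graphs $\G_n[H_i]=\Cay(H,T_{ii})$ and then sorting the inter-part edges (those carried by the $T_{ij}$ with $i\neq j$) into matchings. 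The first bookkeeping step is to read off from (\ref{eq=1})--(\ref{eq=4}) which generators of each $T_{ii}$ fix the $a_{n-2}$-coordinate and which flip it.

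For part~(1) I would verify that $T_{11}=\{a_1,\dots,a_{n-2}\}$ and $T_{44}=\{a_1a_2,\dots,a_{n-4}a_{n-3},a_{n-3},a_{n-2}\}$ are minimal generating sets of $H$ of size $n-2$; for $T_{44}$ one reads off $a_{n-3}$ and $a_{n-2}$ and then recovers $a_{n-4},\dots,a_1$ successively from the products $a_ka_{k+1}$. By Proposition~\ref{prop=2.3-1} this yields $\G_n[H_1]\cong\G_n[H_4]\cong Q_{n-2}$, which is part~(1). In each of $T_{11},T_{44}$ the element $a_{n-2}$ is the unique generator flipping the $a_{n-2}$-coordinate, so deleting its perfect matching splits each $Q_{n-2}$ into its two type-halves, and each half is a Cayley graph of $\langle a_1,\dots,a_{n-3}\rangle\cong\mz_2^{n-3}$ on $n-3$ independent generators, hence $\cong Q_{n-3}$ again by Proposition~\ref{prop=2.3-1}. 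This produces the four hypercubes carried by the types $000,100,010,110$.

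The remaining four hypercubes live inside $H_2$ and $H_3$, and this is the step I expect to be the main obstacle. Here $|T_{22}|=|T_{33}|=n-3$, and the key lemma is that $T_{22}$ and $T_{33}$ are each linearly independent over $\mz_2$, which follows from a triangular elimination on the supports anchored at the unique generator containing $a_1$ (namely $a_1a_3$, resp. $a_1a_2a_3$). Independence forces $\langle T_{22}\rangle$ and $\langle T_{33}\rangle$ to have rank $n-3$, hence index $2$ in $H$, so each of $\G_n[H_2],\G_n[H_3]$ is disconnected into exactly two components of $2^{n-3}$ vertices; and since $T_{22},T_{33}$ are then bases of these index-$2$ subgroups $\cong\mz_2^{n-3}$, Proposition~\ref{prop=2.3-1} makes every component a copy of $Q_{n-3}$. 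The point to be careful about is that for $i=2,3$ these two $Q_{n-3}$'s are the two connected components of $\G_n[H_i]$, i.e. the cosets of $\langle T_{ii}\rangle$, which must be matched up with the intended vertex sets through $\phi$ rather than taken to be the bare $a_{n-2}$-coordinate halves; indeed a direct count gives within-part valency only $n-5$ at a single type such as $001$.

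It remains to assemble three perfect matchings from the leftover edges, namely the inter-part edges together with the two $a_{n-2}$-matchings deleted from $H_1$ and $H_4$. I would take $M_1=\{(h_1,h_2)\mid h\in H\}\cup\{(h_3,h_4)\mid h\in H\}$ (from $T_{12},T_{34}$), $M_2=\{(h_1,h_4)\}\cup\{(h_2,h_3)\}$ (from $T_{14}$ and the $1_H$-part of $T_{23}$), and $M_3=\{(h_1,(a_{n-2}h)_1)\}\cup\{(h_4,(a_{n-2}h)_4)\}\cup\{(h_2,(a_{n-2}h)_3)\}$ (the deleted $a_{n-2}$-edges of $H_1,H_4$ together with the $a_{n-2}$-part of $T_{23}$). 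Each $M_j$ meets every vertex exactly once and the three are pairwise edge-disjoint, and the count $8\cdot(n-3)2^{n-4}+3\cdot 2^{n-1}=n\,2^{n-1}=|E(SQ_n)|$ confirms that the eight hypercubes and the three matchings together exhaust $E(SQ_n)$, giving the claimed decomposition.
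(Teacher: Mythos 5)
Your proposal is correct and, in outline, it is the paper's own argument: the paper obtains Corollary~\ref{cor=1} by transporting through the isomorphism $\phi$ of Lemma~\ref{lem=2} the remark made just after Definition~\ref{defi=4}, namely that $\sg{T_{11}}=\sg{T_{44}}=H$ forces $\G_n[H_1]\cong\G_n[H_4]\cong Q_{n-2}$ via Proposition~\ref{prop=2.3-1}, while for $i=2,3$ the graph $\G_n[H_i]$ falls apart into two components, each isomorphic to $Q_{n-3}$. What you add --- the $\mz_2$-independence of $T_{22}$ and $T_{33}$ by triangular elimination, the explicit grouping of the $T_{ij}$-edges ($i\neq j$) and of the two deleted $a_{n-2}$-matchings into three perfect matchings, and the closing edge count --- is exactly the detail the paper suppresses, and it is all correct.

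The caveat you raise for $H_2$ and $H_3$ deserves to be stated more forcefully: it is not a subtlety of your proof but an error in item (2) of the statement itself. As you compute, the vertices of type $001$ are exactly $\{h_2 : h\in\sg{a_1,\ldots,a_{n-3}}\}$, and the subgraph they induce is $(n-5)$-regular, since only the generators $a_ka_{k+2}$ with $1\le k\le n-5$ of $T_{22}$ survive inside this subgroup; hence it cannot be isomorphic to $Q_{n-3}$, and the same applies to types $101$, $011$, $111$. The four hypercubes lying in $H_2\cup H_3$ are the connected components of $\G_n[H_2]$ and $\G_n[H_3]$, i.e.\ the cosets of $\sg{T_{22}}$ and $\sg{T_{33}}$, and these are genuinely different from the type classes: $a_{n-3}a_{n-2}$ belongs to both $\sg{T_{22}}$ and $\sg{T_{33}}$ but not to $\sg{a_1,\ldots,a_{n-3}}$. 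This corrected reading is the one the paper itself uses in Section~4, where the hypercubes in parts $2$ and $3$ are taken to be $SQ_n[H_2^{21}]$, $SQ_n[H_2^{22}]$, $SQ_n[H_3^{31}]$, $SQ_n[H_3^{32}]$ with $H^{21}=\sg{T_{22}}$ and $H^{31}=\sg{T_{33}}$, not the halves determined by the $(n-2)$-nd coordinate. So your argument proves the decomposition statement, item (1), and item (2) for types $000,100,010,110$; for the remaining four types, item (2) is false as written and should be replaced by the component/coset formulation, which is precisely what your proof establishes.
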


\subsection{Vertex-transitivity of $SQ_n$}

In this subsection, we derive in two lemmas that the spined cube $SQ_n$ is not vertex-transitive when $n\geq6$.

\begin{lem}\label{lem=3}
Let $n\geq 6$. There are exactly $\frac{n^2-5n+12}{2}$ $4$-cycles going through the vertex $(1_H)_1$
in $\G_n$, which are listed in Table~\ref{Table1}. In particular, the following hold.
\begin{enumerate}
\item [\rm (1)] There is only one $4$-cycle going through the edge $((1_H)_1, (1_H)_2)$ in $\G_n$.
\item [\rm (2)] There are exactly three $4$-cycles going through the edge $((1_H)_1,(1_H)_4)$ in $\G_n$.
\item [\rm (3)] The number of $4$-cycles going through
the edge $((1_H)_1, (a_i)_1)$ in $\G_n$ is $n-2$ for $n-3\leq i\leq n-2$, and $n-3$  for $1\leq i\leq n-4$.
\end{enumerate}
\end{lem}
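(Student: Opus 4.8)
The plan is to reduce the whole statement to a finite set of adjacency tests governed by a single double-counting identity. Every $4$-cycle through the vertex $(1_H)_1$ uses exactly two of the $n$ edges incident with $(1_H)_1$, so if $c(e)$ denotes the number of $4$-cycles through an edge $e$ and $N$ denotes the number of $4$-cycles through $(1_H)_1$, then
$$\sum_{e \ni (1_H)_1} c(e) = 2N.$$
Thus parts~(1)--(3), which compute $c(e)$ for the three types of edge at $(1_H)_1$, automatically deliver the count $N$. Moreover, for an edge $e=(x,y)$, a $4$-cycle $x-y-y'-x'-x$ through $e$ is determined exactly by a choice of $x'\in N_{\G_n}(x)\setminus\{y\}$ and $y'\in N_{\G_n}(y)\setminus\{x\}$ with $x'\sim y'$; since one checks directly that these two trimmed neighbourhoods are disjoint, $c(x,y)$ simply equals the number of edges of $\G_n$ joining $N_{\G_n}(x)\setminus\{y\}$ to $N_{\G_n}(y)\setminus\{x\}$.

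The neighbourhoods I need are read off from Eqs.~(\ref{eq=1})--(\ref{eq=4}): with $h=1_H$ these give $N_{\G_n}((1_H)_1)=\{(1_H)_2,(1_H)_4,(a_k)_1: 1\le k\le n-2\}$ together with the explicit forms of $N_{\G_n}((1_H)_2)$ and $N_{\G_n}((1_H)_4)$, while Eq.~(\ref{eq=1}) with $h=a_i$ gives $N_{\G_n}((a_i)_1)=\{(a_i)_2,(a_i)_4,(a_ka_i)_1: 1\le k\le n-2\}$, the term $k=i$ being $(1_H)_1$. Each adjacency test then amounts to deciding, via the Fact on unique representation of elements of $H$, whether the relevant quotient of two group elements lies in the appropriate set $T_{ij}$ of Definition~\ref{defi=4}. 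Three structural features drive everything: $T_{13}=T_{24}=\emptyset$, so layers $1,3$ and layers $2,4$ are never adjacent; a single generator $a_i$ lies in $T_{11}$ for all $i$, in $T_{44}$ only for $i\in\{n-3,n-2\}$, and in $T_{23}$ only for $i=n-2$; and all members of $T_{22}$, $T_{33}$, and the non-exceptional members of $T_{44}$ are products of two or three distinct generators, hence never equal to a single $a_i$.

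Executing the tests: for $e=((1_H)_1,(1_H)_2)$ the unique cross-edge is $(1_H)_4\sim(1_H)_3$ (via $T_{34}=\{1_H\}$), so $c(e)=1$, which is part~(1). For $e=((1_H)_1,(1_H)_4)$ the three cross-edges are $(1_H)_2\sim(1_H)_3$ (via $1_H\in T_{23}$), $(a_{n-2})_1\sim(a_{n-2})_4$ and $(a_{n-3})_1\sim(a_{n-3})_4$ (via $a_{n-3},a_{n-2}\in T_{44}$), so $c(e)=3$, which is part~(2). For $e=((1_H)_1,(a_i)_1)$ the layer-$1$ cross-edges are $(a_k)_1\sim(a_ka_i)_1$ for each $k\ne i$, since $a_ka_i\cdot a_k=a_i\in T_{11}$, contributing $n-3$; in addition $(1_H)_4\sim(a_i)_4$ occurs precisely when $a_i\in T_{44}$, i.e.\ when $i\in\{n-3,n-2\}$, contributing one more, and all remaining tests fail. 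Hence $c(e)=n-2$ for $n-3\le i\le n-2$ and $c(e)=n-3$ for $1\le i\le n-4$, which is part~(3). Summing and halving, $\sum_e c(e)=1+3+2(n-2)+(n-4)(n-3)=n^2-5n+12$, so $N=\tfrac{n^2-5n+12}{2}$, and the cycles produced en route populate Table~\ref{Table1}.

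The routine but error-prone step is the bookkeeping inside part~(3). The main obstacle is verifying that within $H_1$ the neighbour $(a_ka_i)_1$ of $(a_i)_1$ is adjacent to the neighbour $(a_k)_1$ of $(1_H)_1$ \emph{exactly} when the indices agree (a mismatch $l\ne k$ gives $a_la_ia_k$, a product of three distinct generators, which is not in $T_{11}$), and in being certain that the two exceptional single-generator members $a_{n-3},a_{n-2}$ of $T_{44}$ — and no others — create the extra $4$-cycles responsible for the split between $i\le n-4$ and $i\in\{n-3,n-2\}$. The difficulty is therefore not any one computation but guaranteeing that the case analysis over all layer-pairs is exhaustive and that no spurious adjacency (for instance across the empty sets $T_{13},T_{24}$, or from conflating a single generator with a product of generators) is over- or under-counted.
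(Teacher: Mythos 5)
Your proposal is correct and proceeds by essentially the same route as the paper: both reduce the lemma to the per-edge counts of parts (1)--(3), and both establish those counts by the same adjacency tests against the sets $T_{ij}$ of Definition~\ref{defi=4} (the paper phrases them as computing intersections $N_{\G_n}(w)\cap N_{\G_n}(v)\setminus\{(1_H)_1\}$ for pairs of neighbours $w,v$ of $(1_H)_1$, you as counting cross-edges between the trimmed neighbourhoods, which is the same computation, and your disjointness claim is indeed easily verified). The only difference is the final assembly: you halve $\sum_e c(e)=1+3+2(n-2)+(n-4)(n-3)$ via the double-counting identity, whereas the paper directly counts the distinct cycles of Table~\ref{Table1} as $3+\tbinom{n-2}{2}$; both give $\tfrac{n^2-5n+12}{2}$.
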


\renewcommand{\arraystretch}{1.25}
\begin{table}[h]
\begin{center}
\begin{tabular}{|l|l|}
\hline
Row & $4$-cycles  \\
\hline
1&$((1_H)_1,(1_H)_2, (1_H)_3,(1_H)_4,(1_H)_1)$\\
\hline
2& $((1_H)_1,(1_H)_4,(a_{n-3})_4,(a_{n-3})_1,(1_H)_1)$\\
\hline
3&$((1_H)_1,(1_H)_4,(a_{n-2})_4,(a_{n-2})_1,(1_H)_1)$\\
\hline
4&$((1_H)_1,(a_{i})_1,(a_{i}a_j)_1,(a_j)_1,(1_H)_1)$, $1\leq j<i\leq n-2$\\
\hline
\end{tabular}
\end{center}
\vskip 0cm
\caption{All 4-cycles going through $(1_H)_1$ in $\G_n$}
\label{Table1}
\end{table}

\begin{proof} Let $C=((1_H)_1,w,u,v,(1_H)_1)$ be a 4-cycle going through $(1_H)_1$. We have $w\in N_{\G_n}((1_H)_1)$, and since
\begin{eqnarray}
N_{\G_n}((1_H)_1)=\{(1_H)_2,(1_H)_4, (a_i)_1~|~1\leq i\leq n-2\}\label{eq=3.1}
\end{eqnarray}
by Eq.~\eqref{eq=1}, we have $w=(1_H)_2$, $(1_H)_4$ or $(a_i)_1$.
Clearly, $w\neq v$ and $u\neq (1_H)_1$.

(1). Assume $w=(1_H)_2$, that is, $C=((1_H)_1,(1_H)_2,u,v,(1_H)_1)$.
It follows that $v\in N_{\G_{n}}((1_H)_1)\setminus\{(1_H)_2\}$ and $u\in (N_{\G_{n}}((1_H)_2)\cap N_{\G_n}(v))\setminus \{(1_H)_1\}$. By Eq.~\eqref{eq=3.1},
$v=(1_H)_4$ or $(a_i)_1$ with $1\leq i\leq n-2$.
Suppose $v=(a_i)_1$. By Eqs.~\eqref{eq=2} and~\eqref{eq=1}, we have
\begin{eqnarray}
N_{\G_{n}}((1_H)_2)&=&\{(1_H)_1, (1_H)_3, (a_{n-2})_3, (a_{n-3}a_{n-2})_2, (a_ka_{k+2})_2~|~1\leq k\leq n-4\};\label{eq=3.2}\\
N_{\G_{n}}((a_i)_1)&=&\{(a_i)_2, (a_i)_4, (a_ka_i)_1~|~1\leq k\leq n-2\},\label{eq=3.3}
\end{eqnarray}
implying that $(N_{\G_{n}}((1_H)_2)\cap N_{\G_n}(v))\setminus\{(1_H)_1\}=\emptyset$. A contradiction occurs.
Hence $v=(1_H)_4$. Since Eq.~\eqref{eq=4} implies that
\begin{eqnarray}
N_{\G_{n}}((1_H)_4)&=&\{(1_H)_1, (1_H)_3, (a_{n-2})_4, (a_{n-3})_4, (a_ka_{k+1})_4~|~1\leq k\leq n-4\},\label{eq=3.4}
\end{eqnarray}
we have $(N_{\G_{n}}((1_H)_2)\cap N_{\G_n}(v))\setminus \{(1_H)_1\}=\{(1_H)_3\}$.
We conclude that $u=(1_H)_3$.
The (1) holds.

(2). Assume $w=(1_H)_4$. Now, $v\in N_{\G_{n}}((1_H)_1)\setminus\{(1_H)_4\}$ and $u\in (N_{\G_{n}}((1_H)_4)\cap N_{\G_n}(v))\setminus \{(1_H)_1\}$.
By Eq.~\eqref{eq=3.1},
either $v=(1_H)_2$ or $v=(a_i)_1$ with $1\leq i\leq n-2$. For
the former case, the (1)
implies that $C=((1_H)_1,(1_H)_4,(1_H)_3,(1_H)_2,(1_H)_1)$.
For the latter case, we have $u\in (N_{\G_{n}}((a_i)_1)\cap N_{\G_n}((1_H)_4))\setminus \{(1_H)_1\}$. Since $N_{\G_{n}}((a_i)_1)\cap N_{\G_n}((1_H)_4)=\{(1_H)_1\}$ for $1\leq i\leq n-4$ and $\{(1_H)_1, (a_i)_4\}$ for $i=n-3$ or $n-2$ by Eqs.~\eqref{eq=3.3} and~\eqref{eq=3.4}, we have $i=n-3$ or $n-2$ and $u=(a_i)_4$. In this case, there are three 4-cycles going through $((1_H)_1,(1_H)_4)$ in $\G_n$, which are $((1_H)_1,(1_H)_4,(1_H)_3,(1_H)_2,(1_H)_1)$,
$((1_H)_1,(1_H)_4,(a_{n-3})_4,(a_{n-3})_1,(1_H)_1)$
and $((1_H)_1,(1_H)_4,(a_{n-2})_4,(a_{n-2})_1,(1_H)_1)$.
The (2) holds.

(3). Assume $w=(a_i)_1$ with $1\leq i\leq n-2$, that is, $C=((1_H)_1,(a_i)_1,u,v,(1_H)_1)$. Now, $v\in N_{\G_{n}}((1_H)_1)\setminus\{(a_i)_1\}$ and
$u\in (N_{\G_{n}}((a_i)_1)\cap N_{\G_n}(v))\setminus \{(1_H)_1\}$. It follows from Eq.~\eqref{eq=3.1} that $v=(1_H)_2$, $(1_H)_4$ or $(a_j)_1$ with $1\leq j\neq i\leq n-2$.

By (1), we have $v\neq (1_H)_2$.
If $v=(1_H)_4$, then the (2) implies that $C=((1_H)_1,(a_{n-3})_1,(a_{n-3})_4,(1_H)_4,(1_H)_1)$
or $((1_H)_1,(a_{n-2})_1,(a_{n-2})_4,(1_H)_4,(1_H)_1)$.
Finally, let $v=(a_j)_1$ with $1\leq j\neq i\leq n-2$. Now, $u\in (N_{\G_{n}}((a_i)_1)\cap N_{\G_n}((a_j)_1))\setminus\{(1_H)_1\}$, and then by Eq.~\eqref{eq=3.3},
 we have that  $u=(a_ia_j)_1$ and $C=((1_H)_1,(a_i)_1,(a_ia_j)_1,(a_j)_1,(1_H)_1)$.
Hence the number of 4-cycles going through $((1_H)_1,(a_i)_1)$ is $n-2$ when $i=n-3$ or $n-2$, and $n-3$ when $1\leq i\leq n-4$. The (3) holds.

Summing up, there are $3+\frac{(n-3)(n-2)}{2}=\frac{n^2-5n+12}{2}$ 4-cycles going through $(1_H)_1$
in $\G_n$ in total, and all of them are listed in Table~\ref{Table1}.
\end{proof}

\begin{lem}\label{lem=4}
Let $n\geq 6$. There are exactly $\frac{n^2-7n+20}{2}$ $4$-cycles going through the vertex $(1_H)_2$ in $\G_n$, which are listed in Table~\ref{Table2}.
\end{lem}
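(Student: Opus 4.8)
The plan is to follow the same strategy as in the proof of Lemma~\ref{lem=3}. A $4$-cycle through $(1_H)_2$ has the form $C=((1_H)_2,w,u,v,(1_H)_2)$, where $w$ and $v$ are the two neighbours of $(1_H)_2$ lying on $C$ and $u$ is the common neighbour of $w$ and $v$ opposite to $(1_H)_2$. Since $C$ is recovered uniquely from the unordered pair $\{w,v\}$ together with $u$, the number of $4$-cycles through $(1_H)_2$ equals $\sum_{\{w,v\}}\bigl|(N_{\G_n}(w)\cap N_{\G_n}(v))\setminus\{(1_H)_2\}\bigr|$, the sum being over unordered pairs of distinct neighbours of $(1_H)_2$. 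By Eq.~\eqref{eq=2},
$$N_{\G_n}((1_H)_2)=\{(1_H)_1,\ (1_H)_3,\ (a_{n-2})_3,\ (a_{n-3}a_{n-2})_2,\ (a_ka_{k+2})_2\mid 1\le k\le n-4\},$$
so the $n$ neighbours of $(1_H)_2$ consist of the single $H_1$-vertex $(1_H)_1$, the two $H_3$-vertices $(1_H)_3$ and $(a_{n-2})_3$, and $n-3$ vertices of $H_2$. I would organise the pairs $\{w,v\}$ according to how many of $w,v$ lie in $H_2$.

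First I would dispose of the pairs with both $w,v\in H_2$, which produce the $4$-cycles lying inside the induced subgraph on $H_2$. As noted after Definition~\ref{defi=4} (and used in Corollary~\ref{cor=1}), this subgraph is a disjoint union of two copies of $Q_{n-3}$: the set $T_{22}$ is linearly independent in $H\cong\mz_2^{n-2}$, since, viewing each of $a_ka_{k+2}$ $(1\le k\le n-4)$ and $a_{n-3}a_{n-2}$ as an edge on the index set $\{1,\ldots,n-2\}$, these $n-3$ edges form a spanning tree and are therefore independent over $\mz_2$; hence $\langle T_{22}\rangle\cong\mz_2^{n-3}$ has index $2$ in $H$. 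In a hypercube $Q_{n-3}$ there are exactly $\binom{n-3}{2}$ $4$-cycles through a fixed vertex, namely $((1_H)_2,(s)_2,(st)_2,(t)_2,(1_H)_2)$ for the $\binom{n-3}{2}$ pairs of distinct $s,t\in T_{22}$. This accounts for $\frac{(n-3)(n-4)}{2}=\frac{n^2-7n+12}{2}$ cycles, and I would also record here that two such $H_2$-neighbours never share a neighbour outside $H_2$, so nothing is over- or undercounted.

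It then remains to treat the pairs $\{w,v\}$ meeting $\{(1_H)_1,(1_H)_3,(a_{n-2})_3\}$. For these I would write out the neighbourhoods of $(1_H)_1$, $(1_H)_3$, $(a_{n-2})_3$ and of the $H_2$-neighbours via Eqs.~\eqref{eq=1}--\eqref{eq=4} and intersect them pair by pair. The result is that exactly four additional cycles arise: the central square $((1_H)_2,(1_H)_1,(1_H)_4,(1_H)_3,(1_H)_2)$ from $\{(1_H)_1,(1_H)_3\}$; the cycle $((1_H)_2,(1_H)_3,(a_{n-2})_2,(a_{n-2})_3,(1_H)_2)$ from $\{(1_H)_3,(a_{n-2})_3\}$; and the two cycles $((1_H)_2,(1_H)_3,(a_{n-3}a_{n-2})_3,(a_{n-3}a_{n-2})_2,(1_H)_2)$ and $((1_H)_2,(a_{n-2})_3,(a_{n-3})_3,(a_{n-3}a_{n-2})_2,(1_H)_2)$, each joining an $H_3$-neighbour to the distinguished $H_2$-neighbour $(a_{n-3}a_{n-2})_2$. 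Every other pair meeting $\{(1_H)_1,(1_H)_3,(a_{n-2})_3\}$ has no common neighbour besides $(1_H)_2$. Adding these $4$ cycles to the $\binom{n-3}{2}$ interior ones yields $\frac{n^2-7n+12}{2}+4=\frac{n^2-7n+20}{2}$, and listing all of them fills Table~\ref{Table2}.

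The main obstacle is the exhaustiveness and non-redundancy of this last case analysis. Because each vertex is an element of $\mz_2^{n-2}$ with a copy index, deciding whether two listed neighbours coincide reduces to comparing products of the generators $a_i$, and by the Fact such a product is determined by its multiset of indices modulo $2$. I would use this repeatedly to rule out spurious coincidences---for instance the equality $a_ka_{k+2}=a_ja_{j+1}a_{j+2}$ between a weight-$2$ and a weight-$3$ element, or $gg'=a_{n-2}$ for $g,g'\in T_{22}$, where a weight-$1$ element cannot be a sum of two weight-$2$ elements---and it is precisely keeping this bookkeeping complete that is delicate. The hypercube decomposition of $H_2$ is what confines the tedious part to the handful of pairs meeting $H_1\cup\{(1_H)_3,(a_{n-2})_3\}$.
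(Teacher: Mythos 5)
Your proposal is correct, and its skeleton coincides with the paper's: both classify the $4$-cycles $((1_H)_2,w,u,v,(1_H)_2)$ by the unordered pair of neighbours $\{w,v\}\subseteq N_{\G_n}((1_H)_2)$ and then count common neighbours $u\neq(1_H)_2$. Where you genuinely depart from the paper is in the dominant case $w,v\in H_2$. The paper handles this case by brute force: it writes out $N_{\G_n}((a_{n-3}a_{n-2})_2)$ and $N_{\G_n}((a_ia_{i+2})_2)$ from Eq.~\eqref{eq=3} and computes all pairwise intersections explicitly, obtaining rows 5 and 6 of Table~\ref{Table2} (that is, $n-4$ plus $\binom{n-4}{2}$ cycles). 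You instead invoke the structural fact (implicit in Definition~\ref{defi=4} and Corollary~\ref{cor=1}) that the component of $(1_H)_2$ in the induced subgraph $\G_n[H_2]$ is $\Cay(\sg{T_{22}},T_{22})\cong Q_{n-3}$, so a fixed vertex lies on exactly $\binom{n-3}{2}$ interior $4$-cycles; your spanning-tree argument for the linear independence of $T_{22}$, together with the parity observation that $st\neq a_{n-2}$ for $s,t\in T_{22}$ (which rules out a common neighbour of two $H_2$-neighbours outside $H_2$), makes this rigorous. Since $n-4+\binom{n-4}{2}=\binom{n-3}{2}$, your count agrees with the paper's rows 5--6. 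The remaining pairs, those meeting $\{(1_H)_1,(1_H)_3,(a_{n-2})_3\}$, you treat exactly as the paper does, by explicit neighbourhood intersections (the paper shortens the pairs containing $(1_H)_1$ by citing Lemma~\ref{lem=3}~(1), which you could also do), and your four exceptional cycles are precisely rows 1--4 of Table~\ref{Table2}, giving $\frac{n^2-7n+12}{2}+4=\frac{n^2-7n+20}{2}$ in total. What your route buys is a shorter, more conceptual count of the quadratically many interior cycles, explained by the hypercube sitting inside $H_2$; what the paper's route buys is that its explicit intersections simultaneously certify that each pair of $H_2$-neighbours yields exactly one cycle, a point you must (and correctly do) recover from the independence of $T_{22}$.
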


\renewcommand{\arraystretch}{1.25}
\begin{table}[h]
\begin{center}
\begin{tabular}{|l|l|}
\hline
Row & $4$-cycles  \\
\hline
1&$((1_H)_1,(1_H)_2, (1_H)_3,(1_H)_4,(1_H)_1)$\\
\hline
2&$((1_H)_2,(1_H)_3, (a_{n-2})_2,(a_{n-2})_3,(1_H)_2)$\\
\hline
3& $((1_H)_2,(1_H)_3,(a_{n-3}a_{n-2})_3,(a_{n-3}a_{n-2})_2,(1_H)_2))$\\
\hline
4&$(1_H)_2,(a_{n-2})_3,(a_{n-3})_3,(a_{n-3}a_{n-2})_2,(1_H)_2))$\\
\hline
5&$((1_H)_2,(a_{n-3}a_{n-2})_2,(a_{n-3}a_{n-2}a_{i}a_{i+2})_2,(a_{i}a_{i+2})_2,(1_H)_2)),~1\leq i\leq n-4$\\
\hline
6&$((1_H)_2,(a_{i}a_{i+2})_2,(a_{j}a_{j+2}a_{i}a_{i+2})_2,(a_{j}a_{j+2})_2,(1_H)_2))$,~$1\leq j<i\leq n-4$.\\
\hline
\end{tabular}
\end{center}
\vskip 0cm
\caption{All 4-cycles going through $(1_H)_2$ in $\G_n$}
\label{Table2}
\end{table}

\begin{proof} Let $C=((1_H)_2,w,u,v,(1_H)_2)$ be a 4-cycle going through $(1_H)_2$ in $\G_n$. Now, $w,v\in N_{\G_n}((1_H)_2)=\{(1_H)_1, (1_H)_3, (a_{n-2})_3, (a_{n-3}a_{n-2})_2, (a_ka_{k+2})_2~|~1\leq k\leq n-4\}$
(see Eq.~\eqref{eq=3.2}), $w\neq v$ and $u\neq (1_H)_2$.
If $w=(1_H)_1$ or $v=(1_H)_1$, then Lemma~\ref{lem=3}~(1) implies that $C=((1_H)_2,(1_H)_1,(1_H)_4,(1_H)_3,(1_H)_2)$.

Assume $w,v\in \{(1_H)_3, (a_{n-2})_3, (a_{n-3}a_{n-2})_2, (a_ia_{i+2})_2\}$ for some $1\leq i\leq n-4$.
Note that $u\in (N_{\G_n}(w)\cap N_{\G_n}(v))\setminus\{(1_H)_2\}$.
For all possible $w$ and $v$, we list their neighborhoods. By Eq.~\eqref{eq=2} we have
{\small
\begin{eqnarray}
N_{\G_{n}}((1_H)_3)&=&\{(1_H)_4, (1_H)_2, (a_{n-2})_2, (a_{n-3}a_{n-2})_3, (a_ka_{k+1}a_{k+2})_3~|~1\leq k\leq n-4\};\label{eq=4.1}\\
N_{\G_{n}}((a_{n-2})_3)&=&\{(a_{n-2})_4, (a_{n-2})_2, (1_H)_2, (a_{n-3})_3, (a_ka_{k+1}a_{k+2}a_{n-2})_3~|~1\leq k\leq n-4\};\label{eq=4.2}
\end{eqnarray}
}
and by Eq.~\eqref{eq=3} we have
{\small
$$\begin{array}{ll}
&N_{\G_{n}}((a_{n-3}a_{n-2})_2)=\{(a_{n-3}a_{n-2})_1, (a_{n-3}a_{n-2})_3, (a_{n-3})_3, (1_H)_2, (a_ka_{k+2}a_{n-3}a_{n-2})_2|~1\leq k\leq n-4\};\\
&N_{\G_{n}}((a_{i}a_{i+2})_2)=\{(a_{i}a_{i+2})_1, (a_{i}a_{i+2})_3, (a_{n-2}a_{i}a_{i+2})_3, (a_{n-3}a_{n-2}a_{i}a_{i+2})_2,(a_ka_{k+2}a_{i}a_{i+2})_2~|~1\leq k\leq n-4\},
\end{array}$$
}
where $1\leq i\leq n-4$. By an easy check, we have
$$\begin{array}{ll}
&N_{\G_n}((1_H)_3)\cap N_{\G_n}((a_{n-2})_3)=\{(1_H)_2,(a_{n-2})_2\};\\
&N_{\G_n}((1_H)_3)\cap N_{\G_n}((a_{n-3}a_{n-2})_2)=\{(1_H)_2,(a_{n-3}a_{n-2})_3\},\\
&N_{\G_n}((1_H)_3)\cap N_{\G_n}((a_{i}a_{i+2})_2)=\{(1_H)_2\},\\
&N_{\G_n}((a_{n-2})_3)\cap N_{\G_n}((a_{n-3}a_{n-2})_2)=\{(1_H)_2,(a_{n-3})_3\},\\
&N_{\G_n}((a_{n-2})_3)\cap N_{\G_n}((a_{i}a_{i+2})_2)=\{(1_H)_2\},\\
&N_{\G_n}((a_{n-3}a_{n-2})_2)\cap N_{\G_n}((a_{i}a_{i+2})_2)=\{(1_H)_2,(a_{n-3}a_{n-2}a_{i}a_{i+2})_2\},\\
&N_{\G_n}((a_{i}a_{i+2})_2)\cap N_{\G_n}((a_{j}a_{j+2})_2)=\{(1_H)_2,(a_{j}a_{j+2}a_{i}a_{i+2})_2\}~{\rm with}~1\leq i\neq j\leq n-4.
\end{array}$$
Hence when $w,v\neq(1_H)_1$, the 4-cycles going through $(1_H)_2$ are:
$$\begin{array}{ll}
&((1_H)_2,(1_H)_3,(a_{n-2})_2,(a_{n-2})_3,(1_H)_2));\\
&((1_H)_2,(1_H)_3,(a_{n-3}a_{n-2})_3,(a_{n-3}a_{n-2})_2,(1_H)_2));\\
&((1_H)_2,(a_{n-2})_3,(a_{n-3})_3,(a_{n-3}a_{n-2})_2,(1_H)_2));\\
&((1_H)_2,(a_{n-3}a_{n-2})_2,(a_{n-3}a_{n-2}a_{i}a_{i+2})_2,(a_{i}a_{i+2})_2,(1_H)_2)),~1\leq i\leq n-4;\\
&((1_H)_2,(a_{i}a_{i+2})_2,(a_{j}a_{j+2}a_{i}a_{i+2})_2,(a_{j}a_{j+2})_2,(1_H)_2)),~1\leq j<i\leq n-4.
\end{array}$$

Summing up, there are exactly $\frac{n^2-7n+20}{2}$ 4-cycles passing through $(1_H)_2$.\end{proof}

Clearly, $\frac{n^2-5n+12}{2}\neq \frac{n^2-7n+20}{2}$ when $n\geq6$. It follows from Lemmas~\ref{lem=3} and \ref{lem=4} that there are different number of 4-cycles going through the vertices $(1_H)_1$ and $(1_H)_2$ in $\G_n$, and so $\G_n$ is not vertex-transitive when $n\geq 6$. Combined with Lemmas~\ref{lem=1} and~\ref{lem=2}, we have the following theorem.

\begin{theorem}\label{the=4.2}
The $n$-dimensional spined cube $SQ_n$ is not vertex-transitive unless $n\leq 3$.
\end{theorem}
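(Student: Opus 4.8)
The plan is to dispose of the theorem by a case analysis on $n$, assembling the pieces already established in Lemmas~\ref{lem=1}--\ref{lem=4}. For the small values $n\le 3$ I would argue directly that $SQ_n$ is vertex-transitive: $SQ_1$ is the complete graph $K_2$ and $SQ_2$ is the $4$-cycle $C_4$, both of which are visibly vertex-transitive, while the case $n=3$ is exactly Lemma~\ref{lem=1}~(1), where $SQ_3$ is realized as a Cayley graph of $\mz_8$ and hence is vertex-transitive. This settles the ``unless $n\le 3$'' half of the statement.

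For the remaining direction I must show that $SQ_n$ fails to be vertex-transitive whenever $n\ge 4$. The cases $n=4$ and $n=5$ are immediate from Lemma~\ref{lem=1}~(2) and (3), which assert precisely that $SQ_4$ and $SQ_5$ are not vertex-transitive. So the real content lies in the range $n\ge 6$, which I would handle through a local isomorphism invariant, since vertex-transitivity is preserved under graph isomorphism and $SQ_n\cong\G_n$ by Lemma~\ref{lem=2}.

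The key idea is that the number of $4$-cycles passing through a vertex is preserved by every automorphism: if $\pi\in\Aut(\G_n)$ and $C$ is a $4$-cycle through a vertex $v$, then $C^{\pi}$ is a $4$-cycle through $v^{\pi}$, and $\pi$ induces a bijection between the $4$-cycles through $v$ and those through $v^{\pi}$. Consequently, two vertices lying in the same orbit of $\Aut(\G_n)$ must carry the same number of $4$-cycles, so two vertices with distinct $4$-cycle counts necessarily lie in different orbits and the graph cannot be vertex-transitive. It therefore suffices to exhibit two vertices of $\G_n$ with distinct counts, and Lemmas~\ref{lem=3} and~\ref{lem=4} supply exactly this: the vertex $(1_H)_1$ lies on $\frac{n^2-5n+12}{2}$ of the $4$-cycles, whereas $(1_H)_2$ lies on $\frac{n^2-7n+20}{2}$, and these differ by $n-4>0$ for every $n\ge 6$. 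Hence $(1_H)_1$ and $(1_H)_2$ belong to different orbits, $\G_n\cong SQ_n$ is not vertex-transitive, and the proof is complete.

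As for where the difficulty sits: given Lemmas~\ref{lem=3} and~\ref{lem=4}, the theorem is essentially a bookkeeping exercise, so the genuine obstacle is not in the present statement but upstream, in the exhaustive enumeration of $4$-cycles through $(1_H)_1$ and $(1_H)_2$ that produces the two closed-form counts. The only points requiring care at the level of the theorem itself are making the invariance argument precise---namely that an automorphism restricts to a bijection on the $4$-cycles incident to a vertex---and remembering to include the trivially vertex-transitive cases $n=1,2$ so that the ``unless $n\le 3$'' clause is fully justified.
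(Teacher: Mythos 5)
Your proposal is correct and follows essentially the same route as the paper: the small cases $n\leq 5$ are handled via Lemma~\ref{lem=1} (with $SQ_1$, $SQ_2$ trivially vertex-transitive), and for $n\geq 6$ the paper likewise combines Lemma~\ref{lem=2} with the differing $4$-cycle counts of Lemmas~\ref{lem=3} and~\ref{lem=4} at $(1_H)_1$ and $(1_H)_2$ to conclude non-vertex-transitivity. Your explicit justification that automorphisms induce a bijection on $4$-cycles through a vertex is the same (implicit) invariance argument the paper relies on, so there is nothing to add.
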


\section{Edge-disjoint Hamiltonian cycles in $SQ_n$}

In this section, we aim to prove that there exist
two EDHCs in the spined cube $SQ_n$ with $n\geq4$.

\begin{theorem}
There exist two EDHCs in $SQ_n$ when $4\leq n\leq 6$.
\end{theorem}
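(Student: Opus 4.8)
The plan is to settle the three values $n\in\{4,5,6\}$ directly, by exhibiting two edge-disjoint Hamiltonian cycles explicitly. These cases must be treated as base cases: in the block decomposition of Corollary~\ref{cor=1} the constituent hypercubes have dimension $n-3$, and a hypercube $Q_m$ first carries two edge-disjoint Hamiltonian cycles only at $m\geq 4$, i.e. $n\geq 7$; so for $n\leq 6$ the blocks themselves supply no second cycle and a hands-on construction is unavoidable.

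Before building anything, I would record the bookkeeping forced by regularity. Since $SQ_n$ is $n$-regular on $2^n$ vertices, a pair of edge-disjoint Hamiltonian cycles occupies exactly four of the $n$ edges at every vertex. Hence for $n=4$ the two cycles must partition the whole edge set, so what is really sought for $SQ_4$ is a \emph{Hamiltonian decomposition} into two cycles; for $n=5$ a perfect matching is left over, and for $n=6$ a $2$-factor is left over. This already tells me that $n=4$ is the tight case, while $n=5,6$ have room to spare.

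For each $n$ I would work in whichever model is most convenient: the address model of Definition~\ref{defi=2}, or the isomorphic representations $\G_4\cong SQ_4$, $\G_5\cong SQ_5$, $\G_6\cong SQ_6$ supplied by Lemmas~\ref{lem=1} and~\ref{lem=2}, whose neighbourhoods are written out explicitly in Eqs.~\eqref{eq=1}--\eqref{eq=4}. I would then present, for each $n$, two vertex sequences $C_1^{(n)}$ and $C_2^{(n)}$ and verify two routine facts: that each sequence lists all $2^n$ vertices once and closes up using only adjacencies of Definition~\ref{defi=2}, so that it is a Hamiltonian cycle; and that the two (unordered) edge sets are disjoint. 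For $n=4$ and $n=5$ these are small finite checks that can be confirmed with {\sc Magma}, as elsewhere in the paper; for $n=6$ the two cycles are listed explicitly and verified edge by edge.

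To organise the $n=6$ construction I would lean on Corollary~\ref{cor=1}, which splits $SQ_6$ into eight copies of $Q_3$ joined by three perfect matchings, with certain pairs of blocks further combining into copies of $Q_4$. Inside each block there are plentiful Hamiltonian subpaths, and the idea is to route $C_1^{(6)}$ and $C_2^{(6)}$ through the eight blocks along disjoint path systems, splicing between blocks through the matching (spine) edges. The main obstacle is the global coordination: each block is $3$-regular, so a single cycle can use at most two of the three cube edges at any vertex, which forces both cycles to pass between blocks through the three perfect matchings, and these passages must be allotted to the two cycles without collision while each is nevertheless closed into one spanning cycle. Once a consistent allocation is fixed, edge-disjointness inside the blocks follows because within a given $Q_3$ the two cycles then use cube edges in non-overlapping directions, and the remaining verification reduces to the finite checks described above.
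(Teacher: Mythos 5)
Your strategy coincides with the paper's: the paper proves this theorem purely by exhibiting, for each of $n=4,5,6$, two explicit vertex sequences $C_n^{(1)}$, $C_n^{(2)}$ and checking against Definition~\ref{defi=1} that each is a Hamiltonian cycle and that the two are edge-disjoint. Your framing observations are sound and even a bit sharper than what the paper says: the explanation of why $n\leq 6$ must be handled as base cases (the blocks of Corollary~\ref{cor=1} are $Q_{n-3}$, and two EDHCs in $Q_m$ need $m\geq 4$) is correct, and the regularity count showing that for $n=4$ the two cycles must form a Hamiltonian decomposition of the whole edge set is a genuinely useful remark that the paper omits.

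The gap is that the proof is never actually carried out. For a statement of this kind the explicit cycles \emph{are} the proof; a plan to ``present two vertex sequences and verify two routine facts'' does not discharge the obligation until the sequences are on the page, and your proposal contains none of them. This matters most for $n=6$: you correctly isolate the hard part --- both cycles must cross between the eight $3$-regular blocks through only three perfect matchings, and these crossings must be allotted to the two cycles without collision while each closes into a single spanning cycle --- but you then stop at ``once a consistent allocation is fixed,'' which is precisely the step that needs to be exhibited (and is not obviously routine, since a priori the allocation might fail to exist and one might have to let the cycles share block/matching edges in a less structured way, as the paper's explicit $C_6^{(1)}$, $C_6^{(2)}$ in fact do). So the proposal should be graded as a correct outline of the paper's approach, not a complete proof: to finish it you must either write down the six cycles (or certify them by a stated computation), or actually construct the block-routing allocation for $n=6$ and the two smaller cases rather than assert that one exists.
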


\begin{proof} By Definition~\ref{defi=1}, it can be easily checked that
the $C_n^{(1)}$ and $C_n^{(2)}$ listed below are two EDHCs in $SQ_n$ for $4\leq n\leq6$.\end{proof}

{\small
\begin{eqnarray*}
& C_4^{(1)}= &(0000,0010,1010,1011, 1101,1111,0011,0001,0111, 0110, 1110,1100,0100,0101,1001,1000,0000);\\
& C_4^{(2)}= &(0000,0100,0110,0010, 0011,0101,0111,1011,1001, 1111, 1110,1010,1000,1100,1101,0001,0000);\\
& C_5^{(1)}= &(00000,00001,00011,00010, 00110,00100,00101,00111,01011, 01001,01000,01010,\\
& &01110,01100,01101,01111,10011,10001,10111,10101,10100,10110,10010,11010,\\
& &11011,11101,11100,11110,11111,11001,11000,10000,00000);\\
& C_5^{(2)}= &(00000,00010,01010,01011, 01101,00001,00111,00110,01110, 01111, 00011,11111,\\
& &11101,01001,00101,10001,10000,10010,10011,10101,11001,11011,10111,10110,\\
& &11110,11010,11000,01000,01100,11100,10100,00100,00000);\\
& C_6^{(1)}= &(000000,100000,110000,111000,111010,111011,111101,111100,111110,111111,111001,\\
& &110101,110100,100100,100110,100111,101011,101101,101111,110011,110001,110111,\\
& &110110,110010,101010,101110,101100,001100,001101,100101,101001,101000,001000,\\
& &011000,011100,011101,011111,011110,011010,000010,000110,001110,001010,001011,\\
& &001001,100001,100011,100010,010010,010011,001111,000011,000001,000111,011011,\\
& &011001,010101,010111,010110,010100,010000,010001,000101,000100,000000);\\
& C_6^{(2)}= &(000000,010000,110000,110100,111100,111000,111001,111011,110111,110101,110011,\\
& &111111,111101,101000,100101,100111,100001,101101,101100,101000,101010,101011,\\
& &101001,101111,100011,011011,011101,010001,010111,001011,001101,011001,011000,\\
& &011010,010010,010110,100110,100010,100000,100100,000100,010100,011100,001100,\\
& &001110,001111,001001,001000,001010,000010,110010,111010,111110,110110,101110,\\
& &011110,000110,000111,000101,000011,011111,010011,010101,000001,000000).
\end{eqnarray*}
}

Next, we begin to consider the Hamiltonian cycles in $SQ_n$ when $n\geq7$.
Since an $n$-dimensional spined cube $SQ_n$ can be decomposed to eight vertex-disjoint $(n-3)$-dimensional hypercubes $Q_{n-3}$s when $n\geq 7$ by Corollary~\ref{cor=1},
our main strategy to construct EDHCs in $SQ_n$ consists of the following three steps:
\begin{itemize}
\item Step~1: Find two EDHCs in some $Q_{n-3}$s;
\item Step~2: Find EDHCs in the other $Q_{n-3}$s under graph isomorphisms;
\item Step~3: Concatenate the cycles.
\end{itemize}
In the first step, we have the following lemma about hypercubes.
For notation convenience, we use
the elements of the group $\mz_2^n$ to denote the vertices of the hypercube $Q_n$ (see Proposition~\ref{prop=2.3-1}) and the spined cube $SQ_n$ (see Lemma~\ref{lem=2}).

\begin{lem}\label{lem=4.1}
Let $n\geq 4$, and let $\{a_1,\ldots,a_n\}$ be a generating subset of $H=\mz_2^n$. In an $n$-dimensional hypercube $Q_n=\Cay(H,\{a_1,\ldots,a_n\})$,
there exist two EDHCs containing the edges $(1_H,a_n)$ and $(a_n,a_{n-1}a_n)$, respectively.
\end{lem}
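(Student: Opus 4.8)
The plan is to prove the statement by induction on $n$, constructing the two edge-disjoint Hamiltonian cycles explicitly while carefully tracking the ``anchor'' edges $(1_H,a_n)$ and $(a_n,a_{n-1}a_n)$ that the lemma requires them to contain. The base case $n=4$ can be dispatched by exhibiting two concrete cycles in $Q_4=\Cay(\mz_2^4,\{a_1,a_2,a_3,a_4\})$ (analogous to the explicit cycles $C_4^{(1)}, C_4^{(2)}$ already written down for $SQ_4$) and simply verifying they are edge-disjoint, span all $16$ vertices, and contain the prescribed edges; this is a finite check. The inductive step is where the real work lies.

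For the inductive step, I would write $H=\mz_2^n$ and split $Q_n$ along the last coordinate $a_n$ into two copies of $Q_{n-1}$, namely the subgraph $L$ induced by the subgroup $H_0=\langle a_1,\dots,a_{n-1}\rangle$ and the coset $H_1=a_nH_0$; the $a_n$-edges form a perfect matching between them. By the induction hypothesis applied to $L\cong Q_{n-1}=\Cay(H_0,\{a_1,\dots,a_{n-1}\})$, I get two edge-disjoint Hamiltonian cycles $D^{(1)},D^{(2)}$ of $L$, where $D^{(1)}$ contains the edge $(1_H,a_{n-1})$ and $D^{(2)}$ contains $(a_{n-1},a_{n-2}a_{n-1})$ (the anchor edges one dimension down). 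I would then transport copies of these cycles into the coset $H_1$ via the translation $x\mapsto a_nx$, obtaining edge-disjoint Hamiltonian cycles of the other $Q_{n-1}$. The standard merging trick is then applied: to splice a Hamiltonian cycle of $L$ with one of $H_1$ into a single Hamiltonian cycle of $Q_n$, I locate an edge $(p,q)$ in the $L$-cycle and the corresponding matching-parallel edge $(a_np,a_nq)$ in the $H_1$-cycle, delete both, and reconnect using the two $a_n$-rungs $(p,a_np)$ and $(q,a_nq)$. Doing this once for the pair $(D^{(1)}, a_nD^{(1)})$ and once for $(D^{(2)}, a_nD^{(2)})$ yields two Hamiltonian cycles $C^{(1)},C^{(2)}$ of $Q_n$.

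The crux is choosing the splice edges so that (i) $C^{(1)}$ and $C^{(2)}$ remain edge-disjoint and (ii) they carry the new anchor edges $(1_H,a_n)$ and $(a_n,a_{n-1}a_n)$. Observe that both target anchor edges are $a_n$-rungs: $(1_H,a_n)$ joins $1_H\in L$ to $a_n\in H_1$, and $(a_n,a_{n-1}a_n)$ joins the two vertices $a_n,a_{n-1}a_n$ both lying in $H_1$. So for $C^{(1)}$ I would splice across the rung at $1_H$, i.e.\ delete an edge of $D^{(1)}$ incident to $1_H$ together with its mirror in $a_nD^{(1)}$ and reconnect so that $(1_H,a_n)$ becomes a cycle edge; the induction hypothesis that $D^{(1)}$ passes through $1_H$ (via the edge $(1_H,a_{n-1})$) guarantees a usable edge there. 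For $C^{(2)}$, since $(a_n,a_{n-1}a_n)$ lives entirely inside $H_1$ and equals $a_n\cdot(1_H,a_{n-1})$, and since the image $a_nD^{(2)}$ is a translate of $D^{(2)}$, I need the rung used for the second splice to avoid this edge; here the hypothesis that $D^{(2)}$ contains $(a_{n-1},a_{n-2}a_{n-1})$ but that I only need $(a_n,a_{n-1}a_n)$ to be present lets me choose the $C^{(2)}$-splice rung elsewhere. Edge-disjointness of $C^{(1)},C^{(2)}$ reduces to: the $L$-parts are edge-disjoint (induction), the $H_1$-parts are edge-disjoint (translates of the $L$-parts), and the at most four $a_n$-rungs used by the two splices are all distinct—this last is a bookkeeping condition on the choice of splice positions.

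I expect the main obstacle to be exactly this bookkeeping in the inductive step: arranging the two independent splices so that their $a_n$-rungs do not collide and so that both prescribed anchor edges end up in the correct cycles, all while respecting that the induction only hands me control over \emph{one} anchor edge per cycle at the previous level. Getting a clean, uniform choice of splice edges that works for every $n\geq 5$—rather than an ad hoc choice—will require stating the induction hypothesis with enough extra structural information (e.g.\ tracking a short specified arc through $1_H$ and through $a_{n-1}$ in each cycle) so that the reconnection can always be performed and the rungs kept disjoint. Once that strengthened hypothesis is pinned down, the verification itself is routine.
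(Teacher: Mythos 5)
Your overall framework --- induction on $n$ with an explicit base case $n=4$, splitting $Q_n$ along $a_n$ into the copy of $Q_{n-1}$ on $H_0=\langle a_1,\dots,a_{n-1}\rangle$ and its coset $a_nH_0$, transporting cycles by $x\mapsto a_nx$, and splicing --- is exactly the paper's. But your construction of $C^{(2)}$ has a genuine gap, and it is not the bookkeeping issue you anticipate: it cannot work as described. You build $C^{(2)}$ out of $E(D^{(2)})\cup E(a_nD^{(2)})$ plus two $a_n$-rungs, and you need it to contain the anchor $(a_n,a_{n-1}a_n)=a_n\cdot(1_H,a_{n-1})$. That edge lies inside the coset, so it is not a rung, and it belongs to $a_nD^{(2)}$ if and only if $(1_H,a_{n-1})\in E(D^{(2)})$. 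By your own induction hypothesis, $(1_H,a_{n-1})$ is the anchor of $D^{(1)}$, hence by edge-disjointness it is \emph{not} an edge of $D^{(2)}$. So the required anchor is simply absent from the edge set you permit $C^{(2)}$ to use; no choice of splice positions can fix this, and your remark about choosing ``the $C^{(2)}$-splice rung elsewhere'' misdiagnoses the obstruction.

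The paper's inductive step contains the one idea you are missing: the second cycle \emph{recycles the two edges discarded by the first splice}. Forming $\widehat{C}_1$ deletes $(1_H,a_{n-1})$ from $D^{(1)}$ and its mirror $(a_n,a_{n-1}a_n)$ from $a_nD^{(1)}$, and the latter is precisely the anchor the second cycle must contain. Accordingly, the paper writes $D^{(2)}=(1_H,u,\dots,a_{n-1},v,\dots,1_H)$ (possible since $(1_H,a_{n-1})\notin E(D^{(2)})$, so $u,v\notin\{1_H,a_{n-1}\}$), deletes the edges $(1_H,u)$ and $(a_{n-1},v)$ together with their mirrors in $a_nD^{(2)}$, and reconnects the four resulting paths using the two rungs at $u$ and $v$ \emph{plus} the two freed horizontal edges $(1_H,a_{n-1})$ and $(a_n,a_{n-1}a_n)$. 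The result is a Hamiltonian cycle through $(a_n,a_{n-1}a_n)$, and edge-disjointness from $\widehat{C}_1$ is automatic: those two horizontal edges were removed from $\widehat{C}_1$, the rungs at $u,v$ are distinct from the rungs at $1_H$ and $a_{n-1}$ used by $\widehat{C}_1$, and all remaining edges are disjoint by induction. With this repair the induction closes with the lemma exactly as stated; no strengthened hypothesis tracking extra arcs is needed.
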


\begin{proof} We proceed by induction on $n$.
It can be easily checked that the following two cycles $C_1$ and $C_2$ are EDHCs in $Q_4$ containing the edges $(1_H,a_4)$ and $(a_4,a_3a_{4})$, respectively, and so the lemma is true when $n=4$.
\begin{eqnarray*}
& C_1= &(1_H,a_3,a_2a_3,a_1a_2a_3, a_1a_3,a_1,a_1a_2
a_2,a_2a_4,a_1a_2a_4, a_1a_2a_3a_4, a_2a_3a_4,a_3a_4,a_1a_3a_4,a_1a_4,a_4,1_H);\\
& C_2= & (1_H,a_1,a_1a_4,a_1a_2a_4,a_1a_2,a_1a_2a_3,a_1a_2a_3a_4,
a_1a_3a_4,a_1a_3,a_3,a_3a_4,a_4,a_2a_4,a_2a_3a_4,a_2a_3,a_2,1_H).
\end{eqnarray*}

Assume that the lemma is true for some $k\geq4$,
and let $n=k+1$. Let
$H_1=\sg{a_1,\ldots,a_{k}}$, the subgroup of $\mz_2^{k+1}$
generated by $a_1,\ldots, a_{k}$, and denote $H_2=\{ga_{k+1}~|~g\in H_1\}$, the coset of
$H_1$ in $\mz_2^{k+1}$. Clearly, $H_1\cong\mz_2^{k}$, and $Q_{k+1}[H_1]\cong Q_{k+1}[H_2]\cong Q_{k}$ (see Proposition~\ref{prop=2.3-1}). The map $g\mapsto ga_{k+1},~\forall g\in H_1,$ induces an isomorphism from
$Q_{k+1}[H_1]$ to $Q_{k+1}[H_1]$, say $\alpha$. Moreover, for each vertex $g$ in $Q_{k+1}[H_1]$, $g^{\a}=ga_{k+1}$ is the unique neighbor of $g$ in $Q_{k+1}[H_2]$.

By the induction hypothesis, there exist two EDHCs
$C_1$ and $C_2$, containing the edges $(1_H,a_{k})$ and $(a_{k},a_{k-1}a_{k})$, respectively. Since $\alpha$ is an isomorphism from $Q_{k+1}[H_1]$ to $Q_{k+1}[H_1]$,
$C_1^{\alpha}$ and $C_2^{\alpha}$ are two EDHCs
in $Q_{k+1}[H_2]$. Moreover, the cycles $C_1^{\alpha}$ contains the edge $(1_H,a_{k})^{\alpha}=(a_{k+1},a_{k}a_{k+1})$.
Let
$$\widehat{C_1}=C_1-(1_H,a_{k})+(1_H,a_{k+1})+C_1^{\a}-(a_{k+1},a_{k}a_{k+1})+(a_{k}a_{k+1},a_{k}).$$
Now, $\widehat{C_1}$ is an Hamiltonian cycle in $Q_{k+1}$, and the edge $(1_H,a_{k+1})$ belongs to $\widehat{C_1}$.
%, see Figure~\ref{fig=4}~(a).

%\begin{figure}[ht]
%\begin{center}
%\includegraphics[height=5.5cm,width=13cm]{fig5.eps}
%\end{center}
%\vskip-0.5cm
%\caption{Two EDHCs in $Q_n$}\label{fig=4}
%\end{figure}

Since $C_1$ and $C_2$ are edge-disjoint and $(1_H,a_{k})\in E(C_1)$,
the edge $(1_H,a_{k})\notin E(C_2)$, that is, $d_{C_2}(1,a_{k})\geq2$.
Noting that $C_2$ is a Hamiltonian cycle in $Q_n[H_1]$
with length $2^k\geq 2^4$, we may assume that
$C_2=(1_H,u,\ldots,a_{k},v,\ldots,1_H)$,
where $u$ and $v$ are neighbors of $1_H$ and $a_{k}$ in $C_2$, respectively.
Clearly,  $\{u,v\}\cap \{a_{k+1},a_{k},1_H\}=\emptyset$.
Denote $P_1$ be the subpath from $u$ to $a_{k}$ in $C_2$,
and $P_2$ the subpath from $1$ to $v$.
Let
\begin{eqnarray*}
& \widehat{C_2}& =P_1+(a_{k},1_H)+P_2+(v,v^{\a})+P_2^{\a}+(1_H^{\a},a_{k}^{\a})+P_1^{\a}+(u^{\a},u)\\
& &=P_1+(a_{k},1_H)+P_2+(v,va_{k+1})+P_2^{\a}+(a_{k+1},a_{k+1}a_{k})+P_1^{\a}+(ua_{k+1},u).
\end{eqnarray*}
The $\widehat{C_2}$ is an Hamiltonian cycle in $Q_n$, containing the edge $(a_{k+1},a_{k+1}a_{k})$. Since $v\notin\{1_H,a_{k}\}$,
we have $\{(1_H,a_{k+1}),(a_{k}a_{k+1},a_{k})\}\cap \{(v,va_{k+1}), (a_{k+1},a_{k+1}a_{k})\}=\emptyset$,
and since $E(C_1)\cap [E(P_1)\cap E(P_2)]\subseteq E(C_1)\cap E(C_2)=\emptyset$,
we conclude that $\widehat{C_1}$ and $\widehat{C_2}$ are edge-disjoint. The proof is complete.\end{proof}

Next, we aim to find some graph isomorphisms (see Step 2), and we need some symbols. Let $H=\sg{a_1}\times\cdots \times\sg{a_{n-2}}\cong\mz_2^{n-2}$ with $n\geq 7$,
and let
\begin{eqnarray*}
H^{11}& =&\sg{a_1,a_2,\ldots, a_{n-3}};\\
H^{21}&=&\sg{a_1a_3,a_2a_4,\ldots,a_{n-4}a_{n-2},a_{n-3}a_{n-2}};\\
H^{31}&=&\sg{a_1a_2a_3,a_2a_3a_4,\ldots,a_{n-4}a_{n-3}a_{n-2},a_{n-3}a_{n-2}};\\
H^{41}&=&\sg{a_1a_2,a_2a_3,\ldots, a_{n-4}a_{n-3},a_{n-3}};\\
H^{i2}&=&\{ha_{n-2}~|~h\in H^{i1}\},~1\leq i\leq 4;\\
H_{i}^{ij}&=&\{h_i~|~h\in H^{ij}\},~1\leq i\leq 4,~1\leq j\leq 2.
\end{eqnarray*}
By some primary knowledge in group theory, one can observe that $H^{i1}$ is a subgroup of $H$ isomorphic to $\mz_2^{n-3}$, $H^{i2}$ is a coset of $H^{i1}$ in $H$, and $H=H^{i1}\cup H^{i2}$, where $1\leq i\leq 4$.
In view of Lemma~\ref{lem=2}, $SQ_n$ is a 4-Cayley graph of $H$, and by Definition~\ref{defi=3}, we have $V(SQ_n)=\bigcup_{i=1}^4H_i=\bigcup_{i=1}^4(H_i^{i1}\cup H_i^{i2})$. Moreover, the induced subgraph $SQ_n[H_i^{ij}]$ is isomorphic to $Q_{n-3}$
(see Corollary~\ref{cor=1}), where $1\leq i\leq 4$ and $1\leq j\leq2$.
Define the map from $H_i^{i1}$ to $H_i^{i2}$ with $1\leq i\leq 4$ as follows:
\begin{eqnarray*}
&\R(a_{n-2}): &h_i\mapsto (ha_{n-2})_i,~\forall h_i\in H_i^{i1}.
\end{eqnarray*}
It can be checked easily that $\R(a_{n-2})$ is an isomorphism from $SQ_n[H_i^{i1}]$ to $SQ_n[H_i^{i2}]$.

Now, we are ready to construct EDHCs in the spined cube $SQ_n$
with $n\geq7$.

\begin{theorem}\label{theo=4.1}
There exist two EDHCs in $SQ_n$ when $n\geq 7$.
\end{theorem}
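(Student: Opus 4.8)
The plan is to follow the three-step strategy announced right before the theorem, using the decomposition of $SQ_n$ into eight vertex-disjoint copies of $Q_{n-3}$ given by Corollary~\ref{cor=1}, together with the two EDHCs in each hypercube provided by Lemma~\ref{lem=4.1} and the connecting matchings coming from Definition~\ref{defi=4}. Writing $n-3\geq 4$ so that Lemma~\ref{lem=4.1} applies, I would first use that lemma to produce, inside each of the eight induced subgraphs $SQ_n[H_i^{ij}]\cong Q_{n-3}$, two edge-disjoint Hamiltonian cycles whose guaranteed edges are $(1_H,a_{n-3})_i$-type and $(a_{n-3},a_{n-4}a_{n-3})_i$-type. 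The isomorphism $\R(a_{n-2})$ transports the cycles found in $SQ_n[H_i^{i1}]$ to Hamiltonian cycles in $SQ_n[H_i^{i2}]$, carrying the distinguished edges along predictably; this realizes Step~2 and means I only have to run Lemma~\ref{lem=4.1} on the four ``lower'' copies rather than all eight.

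The heart of the argument is Step~3, the concatenation. Here I would exploit the specific connecting edges recorded in Equations~(\ref{eq=1})--(\ref{eq=4}): the matchings $T_{12}=T_{14}=T_{34}=\{1_H\}$ and $T_{23}=\{1_H,a_{n-2}\}$ tell me exactly which vertices in different $H_i$-parts are joined, and the single extra edges in the two $Q_{n-3}$ that assemble into $Q_{n-2}$ (Corollary~\ref{cor=1}(1)) give the ``cross'' edges $\R(a_{n-2})$ uses. The idea is the standard cycle-merging move already used in the proof of Lemma~\ref{lem=4.1}: to splice two vertex-disjoint cycles $A$ and $B$ into one, delete an edge $(p,q)$ of $A$ and an edge $(p',q')$ of $B$ for which $(p,p')$ and $(q,q')$ are connecting edges of $SQ_n$, then add $(p,p')$ and $(q,q')$. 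I would perform a sequence of eight such merges, one per copy, fusing the eight small cycles into a single Hamiltonian cycle of $SQ_n$, and carry out the entire construction in parallel twice so that the two resulting global cycles inherit edge-disjointness from the edge-disjointness of the local cycles and from a disjoint choice of splicing edges.

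The main obstacle, and the step needing genuine care, is arranging the two rounds of splicing so that no edge is reused. Two dangers must be controlled simultaneously: the splice edges chosen in round one (for $\widehat{C}^{(1)}$) must avoid those chosen in round two (for $\widehat{C}^{(2)}$), and within each round the connecting edges consumed to merge adjacent copies must not collide with the distinguished edges $(1_H,a_{n-3})$ and $(a_{n-3},a_{n-4}a_{n-3})$ that Lemma~\ref{lem=4.1} promised lie on the local cycles. Because $T_{23}$ has two elements while each of $T_{12},T_{14},T_{34}$ has only one, there is exactly enough room: I would route the first global cycle through the ``$1_H$'' matching edges and reserve the ``$a_{n-2}$'' edge of $T_{23}$ (and the two distinguished $Q_{n-3}$-edges, transported by $\R(a_{n-2})$ into the appropriate copies) for the second global cycle. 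Verifying this bookkeeping is what the proof must do explicitly.

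I expect the remaining details to be routine once the splicing scheme is fixed: checking that each merge joins two genuinely distinct cycles into a single one (no premature sub-cycles), that every vertex is covered exactly once, and that the two final cycles share no edge. These are finite, local checks at each of the eight copies and can be tabulated; the only conceptual content is the compatibility argument of the previous paragraph, so I would present the explicit list of deleted and added edges for both $\widehat{C}^{(1)}$ and $\widehat{C}^{(2)}$ and then simply read off edge-disjointness.
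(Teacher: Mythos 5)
Your overall strategy is the paper's own: decompose $SQ_n$ into the eight copies $SQ_n[H_i^{ij}]\cong Q_{n-3}$ (Corollary~\ref{cor=1}), obtain two EDHCs with prescribed edges in each of the four lower copies from Lemma~\ref{lem=4.1}, transport them by $\R(a_{n-2})$ to the upper copies, and splice the sixteen local cycles into two global ones. The gap lies in the one step you yourself flag as critical --- the edge-disjointness bookkeeping --- and it stems from a misreading of the connection structure of the $4$-Cayley graph. The set $T_{12}=\{1_H\}$ does not mean there is a single edge between $H_1$ and $H_2$: by Definition~\ref{defi=n-Cayley} it means that \emph{every} $h\in H$ gives an edge $(h_1,h_2)$, i.e.\ a perfect matching of $2^{n-2}$ edges; likewise for $T_{14}$, $T_{34}$, for the two matchings coming from $T_{23}=\{1_H,a_{n-2}\}$, and for the ``extra'' edges inside $H_1$ and $H_4$ of Corollary~\ref{cor=1}(1), which are also matchings rather than single edges. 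Your phrases ``exactly enough room'' and ``reserve the `$a_{n-2}$' edge of $T_{23}$'' only make sense under the single-edge reading --- and under that reading the theorem would in fact be false: $H_1$ would be joined to the rest of the graph by only two edges (one to $H_2$, one to $H_4$), whereas two edge-disjoint Hamiltonian cycles require four distinct edges across that cut.

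Under the correct reading the scarcity problem disappears, but so does your reservation scheme as literally stated: the second global cycle cannot avoid ``$1_H$-type'' bridges, because the only edges between $H_1$ and $H_2$, between $H_3$ and $H_4$, and between $H_1$ and $H_4$ are of that type. Both global cycles must use bridges from the very same matchings, merely at different group elements; this is exactly what the paper's explicit concatenation does (for instance $C_1$ uses the bridge $((a_{n-2})_1,(a_{n-2})_2)$ while $C_2$ uses $((a_{n-4}a_{n-3})_1,(a_{n-4}a_{n-3})_2)$ --- same matching, different elements), and the prescribed edges from Lemma~\ref{lem=4.1} are precisely the edges that get \emph{deleted} so that their endpoints serve as attachment points for the bridges. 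So your proposal needs its compatibility argument replaced: not a count of matching ``types'' shared out between the two rounds, but an explicit, element-wise disjoint choice of deleted edges and bridge edges around the ring of eight copies. Producing and verifying such a list is not a routine afterthought; it is the actual content of the paper's proof of this theorem.
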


\begin{proof} Since the induced subgraphs
\begin{eqnarray*}
SQ_n[H_1^{11}]& \cong &\Cay(H^{11},\{a_1,a_2,\ldots, a_{n-3}\})\cong Q_{n-3};\\
SQ_n[H_2^{21}]&\cong&\Cay(H^{21},\{a_1a_3,a_2a_4,\ldots,a_{n-4}a_{n-2},a_{n-3}a_{n-2}\})\cong Q_{n-3};\\
SQ_n[H_3^{31}]&\cong&\Cay(H^{31},\{a_1a_2a_3,a_2a_3a_4,\ldots,a_{n-4}a_{n-3}a_{n-2},a_{n-3}a_{n-2}\})\cong Q_{n-3};\\
SQ_n[H_4^{41}]&\cong&\Cay(H^{41},\{a_1a_2,a_2a_3,\ldots, a_{n-4}a_{n-3},a_{n-3}\})\cong Q_{n-3},
\end{eqnarray*}
each of them admits two EDHCs by Lemma~\ref{lem=4.1}.
Assume that $C_{i1}$ and $C_{i2}$ are two EDHCs in
$SQ_n[H_i^{i1}]$, and by Lemma~\ref{lem=4.1} we may further assume that
\begin{eqnarray*}
&&((1_H)_1,(a_{n-3})_1)\in E(C_{11}),~((a_{n-3})_1, (a_{n-4}a_{n-3})_1)\in E(C_{12});\\
&&((1_H)_2,(a_{n-3}a_{n-2})_2)\in E(C_{21}),~ ((a_{n-3}a_{n-2})_2,(a_{n-4}a_{n-3})_2)\in E(C_{22});\\
&&((1_H)_3,(a_{n-3}a_{n-2})_3)\in E(C_{31}),~ ((a_{n-3}a_{n-2})_3,(a_{n-4})_3)\in E(C_{32});\\
&&((1_H)_4,(a_{n-3})_4)\in E(C_{41}),~ ((a_{n-3})_4,(a_{n-4})_4)\in E(C_{42}).
\end{eqnarray*}

%\begin{figure}[ht]
%\begin{center}
%\includegraphics[height=14cm,width=12cm]{fig6.eps}
%\end{center}
%\vskip-0.5cm
%\caption{Two EDHCs in $\G_n$}\label{fig=5}
%\end{figure}

Let $C_{i1}'=C_{i1}^{\R(a_{n-2})}$ and $C_{i2}'=C_{i2}^{\R(a_{n-2})}$ for $1\leq i\leq 4$. Since $\R(a_{n-2})$ is an isomorphism from $SQ_n[H_i^{i1}]$ to $SQ_n[H_i^{i2}]$, $C_{i1}'$ and $C_{i2}'$ are two EDHCs of $SQ_n[H_i^{i2}]$.
Since
$((1_H)_1,(a_{n-3})_1)\in E(C_{11})$ and $((a_{n-3})_1, (a_{n-4}a_{n-3})_1)\in E(C_{12})$,
we have
\begin{eqnarray*}
&&((1_H)_1,(a_{n-3})_1)^{\R(a_{n-2})}=((a_{n-2})_1,(a_{n-3}a_{n-2})_1)\in E(C_{11}'),\\
&&((a_{n-3})_1, (a_{n-4}a_{n-3})_1)^{\R(a_{n-2})}=((a_{n-3}a_{n-2})_1,(a_{n-4}a_{n-3}a_{n-2})_1 )\in E(C_{12})'.
\end{eqnarray*}
Similarly, we have
\begin{eqnarray*}
&&((a_{n-2})_2,(a_{n-3})_2)\in E(C_{21}'),~ ((a_{n-3})_2,(a_{n-4}a_{n-3}a_{n-2})_2)\in E(C_{22}');\\
&&((a_{n-2})_3,(a_{n-3})_3)\in E(C_{31}'),~ ((a_{n-3})_3,(a_{n-4}a_{n-2})_3)\in E(C_{32}');\\
&&((a_{n-2})_4,(a_{n-3}a_{n-2})_4)\in E(C_{41}'),~ ((a_{n-3}a_{n-2})_4,(a_{n-4}a_{n-2})_4)\in E(C_{42'}).
\end{eqnarray*}

Now, we have 16 cycles in $SQ_n$, and clearly, they are edge-disjoint.
Finally, we concatenate the cycles.
Let
\begin{eqnarray*}
&C_1= & C_{11}-((1_H)_1,(a_{n-3})_1)+((a_{n-3})_1,(a_{n-3}a_{n-2})_1)+C_{11}'-((a_{n-3}a_{n-2})_1,(a_{n-2})_1)\\
&&+((a_{n-2})_1,(a_{n-2})_2)+C_{21}'-((a_{n-2})_2,(a_{n-3})_2)+((a_{n-3})_2,(a_{n-3}a_{n-2})_3)
+C_{31}\\
&&-((a_{n-3}a_{n-2})_3,(1_H)_3)+((1_H)_3,(1_H)_2)+C_{21}-((1_H)_2,(a_{n-3}a_{n-2})_2)\\
&&+((a_{n-3}a_{n-2})_2,(a_{n-3})_3)+C_{31}'-((a_{n-3})_3,(a_{n-2})_3)+((a_{n-2})_3,(a_{n-2})_4)\\
&&+C_{41}'-((a_{n-2})_4,(a_{n-3}a_{n-2})_4)+((a_{n-3}a_{n-2})_4,(a_{n-3})_4)+C_{41}\\
&&-((a_{n-3})_4,(1_H)_4)+((1_H)_4,(1_H)_1),
\end{eqnarray*}
and let
\begin{eqnarray*}
&C_2= &C_{12}-((a_{n-3})_1, (a_{n-4}a_{n-3})_1)+((a_{n-4}a_{n-3})_1,(a_{n-4}a_{n-3})_2)+C_{22}\\
&&-((a_{n-3}a_{n-2})_2,(a_{n-4}a_{n-3})_2)+((a_{n-3}a_{n-2})_2,(a_{n-3}a_{n-2})_1)
+C_{12}'\\
&&-((a_{n-3}a_{n-2})_1,(a_{n-4}a_{n-3}a_{n-2})_1)+((a_{n-4}a_{n-3}a_{n-2})_1,(a_{n-4}a_{n-3}a_{n-2})_2)+C_{22}'\\
&&-((a_{n-4}a_{n-3}a_{n-2})_2,(a_{n-3})_2)+((a_{n-3})_2,(a_{n-3})_3)+C_{32}'-((a_{n-3})_3,(a_{n-4}a_{n-2})_3)\\
&&+((a_{n-4}a_{n-2})_3,(a_{n-4}a_{n-2})_4)+C_{42}'-((a_{n-3}a_{n-2})_4,(a_{n-4}a_{n-2})_4)\\
&&+((a_{n-3}a_{n-2})_4,(a_{n-3}a_{n-2})_3)+C_{32}-((a_{n-3}a_{n-2})_3,(a_{n-4})_3)\\
&&+((a_{n-4})_3,(a_{n-4})_4)+C_{42}-((a_{n-4})_4,(a_{n-3})_4)+((a_{n-3})_4,(a_{n-3})_1).
\end{eqnarray*}
We conclude that $C_{1}$ and $C_{2}$ are EDHCs of $SQ_n$. The proof is complete.\end{proof}

\section{Conclusion}

Graph symmetry is an important factor in the design of a network. The spined cube $SQ_n$ was introduced by Zhou et al.~\cite{ZFJZ} in 2011 as a variant of the hypercube $Q_n$, whose diameter is less than most known variants of hypercubes.
The hypercube have been well-studied in the literature. A natural problem is how to use the numerous works about hypercubes to study the variants,
that is, how to establish the connection between hypercube and its variants.
This is also a reason why we consider the symmetric property of
the spined cube in this paper.
We first prove that
$SQ_n$ is a 4-Cayley graph of an elementary abelian 2-group $\mz_2^{n-2}$ when $n\geq6$,
and then have that it is not vertex-transitive unless $n\leq3$.
The symmetric property of $SQ_n$ shows that
it can be decomposed to eight vertex-disjoint $(n-3)$-dimensional hypercubes
when $n\geq6$. By using the existence of EDHCs in hypercubes, we show that there exists two EDHCs in $SQ_n$ when $n\geq4$.

\medskip

\f {\bf Acknowledgement:} The second author was supported by the National Natural Science Foundation of China (12101070,11731002,1201101021,12161141005).
The third author was supported by the National Natural Science Foundation of China (11731002,1201101021,12161141005).


\begin{thebibliography}{99}
\bibitem{AC}
M. Abdallah, E. Cheng, Fault-tolerant Hamiltonian connectivity of 2-tree-generated
networks, Theoret. Comput. Sci. 907 (2022) 62-81.
\bibitem{AP}
S. Abraham, K. Padmanabhan, The twisted cube topology for multiprocessors: a study in network asymmetry, J. Parallel Distrib. Comput. 13 (1991) 104-110.

\bibitem{AB}
B. Albader, B. Bose. Edge Disjoint Hamiltonian Cycles in Gaussian Networks, IEEE Trans. Comput. 65(1) (2015) 315-321.
\bibitem{AK}
S.B. Akers, B. Krishnamurthy, A group-theoretic model for symmetric
interconnection networks, IEEE Trans. Comput. 38 (1989) 555-566.

\bibitem{AT}
M. Arezoomand, B. Taeri, On the characteristic polynomial of $n$-Cayley digraphs, Electron. J. Combin. 20(57) (2013) 14.

\bibitem{ASD}
M. Arockiaraj, A.J. Shalini, J.N. Delaila, Embedding algorithm of spined cube into grid structure and its wirelength computation, Theoret. Comput. Sci. 905 (2022) 69-86.


\bibitem{Bhu}
L.N. Bhuyan, D.P. Agrawal, Generalized hypercube and hyperbus
structures for a computer network, IEEE Trans. Comput. 33 (1984) 323-333.

\bibitem{BM}
J.A. Bondy, U.S.R. Murty, Graph Theory with Applications, Elsevier North Holland, New York, 1976.

\bibitem{BCP}
W. Bosma, C. Cannon, C. Playoust,
The {\sc Magma} algebra system I: The user language,
J. Symbolic Comput. 24 (1997) 235-265.

\bibitem{CMY}
X.N. Chang, J.C. Ma, D.-W. Yang, Symmetric property and reliability of locally twisted cubes, Discrete Appli. Math. 288 (2021) 257-269.

\bibitem{CZW}
G.-L. Cheng, Q. Zhu, X.-K. Wang, On the reliability and fault tolerance of sipined cubes,
Proceedings of the 2012 International Conference on Wavelet Analysis and Pattern Recognition, Xian, 2012.

\bibitem{CZFZ}
M. Conder, J.-X. Zhou, Y.-Q. Feng, M.-M. Zhang,
Edge-transitive bi-Cayley graphs, J. Combin. Theory Ser. B 145 (2020) 264-306.



\bibitem{FZL}
Y.-Q. Feng, J.-X. Zhou, Y.-T. Li, Pentavalent symmetric graphs of order twice a prime power, Discrete Math. 339 (2016) 2640-2651.


\bibitem{he}
M.C. Heydemann, Cayley graphs and interconnection networks,
in: G. Hahn and G. Sabidussi eds., Graph Symmetry, Kluwer
Academic Publishing, Dordrecht, 1997, pp. 167-224.

\bibitem{Hung}
R.W. Hung, Embedding two edge-disjoint Hamiltonian cycles into locally twisted cubes, Theoret. Comput. Sci. 412 (2011) 4747-4753.

\bibitem{Hung2}
R.W. Hung, The property of edge-disjoint Hamiltonian cycles in
transposition networks and hypercube-like networks, Discrete Appl. Math. 181 (2015) 109-122.

\bibitem{HBA}
Z.A. Hussain, B. Bose, A. Al-Dhelaan, Edge disjoint Hamiltonian cycles in Eisenstein-Jacobi networks, J. Parallel Distrib. Comput. 86 (2015) 62-70.
\bibitem{Hu}
W. Hussak, Disjoint Hamilton cycles in transposition graphs, Discrete Appl. Math. 206 (2016) 56-64.

\bibitem{HKM}
A. Hujdurovi\' c, K. Kutnar, D. Maru\v si\v c,
On normality of $n$-Cayley graphs, Appl. Math. Comput. 332 (2018) 469-476.

\bibitem{KB}
P. Kulasinghe, S. Bettayeb, Multiply-twisted hypercube with five or more dimensions is not vertex-transitive, Inform. Process. Lett. 53 (1995) 33-36.


\bibitem{LM}
X. Liang, J. Meng, Connectivity of bi-Cayley graphs, Ars Combin. 88 (2008) 27-32.


\bibitem{LHJ}
T.J. Lin, S.Y. Hsieh, J.S.-T. Juan, Embedding cycles and paths in product networks and their applications to multiprocessor systems, IEEE Trans. Parallel
Distrib. Syst. 23 (6) (2012) 1081-1089.




\bibitem{Lv1}
H.Z. L\" u, On the conjecture of vertex-transitivity of Dcell, Inform. Process. Lett.  142 (2019) 80-83.
\bibitem{LW}
H.Z. L\" u, T.Z. Wu, Edge-disjoint Hamiltonian cycles of balanced hypercubes, Inform. Process. Lett. 144(2019) 25-30.

\bibitem{MP}
B.D. McKay, C.E. Praeger, Vertex-transitive graphs which are not Cayley graphs I, J. Aust. Math. Soc. 56 (1994) 53-63.

\bibitem{PCY}
K.-J. Pai, J.-M. Chang, J.-S. Yang, Vertex-transitivity on folded crossed cubes, Inform. Process. Lett. 116 (2016) 689-693.

\bibitem{RB}
R. Rowley, B. Bose, Edge-disjoint Hamiltonian cycles in de Bruijn networks, in: Proc. 6th Distributed Memory Computing Conference, 1991,
pp. 707-709.

\bibitem{SKHB}
K. Satoh, K. Kaneko, P.T.H. Hanh, H.T.T. Binh, Shortest-path routing in spined cubes,
2017 6th ICT International Student Project Conference (ICT-ISPC), Johor, 2017.

\bibitem{WFZ}
Y. Wang, Y.-Q. Feng, J.-X. Zhou,
Automorphism group of the varietal hypercube graph, Graphs Combin. 33(5) (2017) 1131-1137.

\bibitem{X}
J.M. Xu, Topological structure and analysis of interconnection networks, Kluwer Academic Publishers, 2001.



\bibitem{ZKFW}
J.-X. Zhou, J.H. Kwak, Y.-Q. Feng, Z.-L. Wu,
Automorphism group of the balanced hypercube, Ars Math. Contemp. 12(1) (2017) 145-154.

\bibitem{ZWYY}
J.-X. Zhou, Z.-L. Wu, S.-C. Yang, K.-W. Yuan,
Symmetric property and reliability of balanced hypercube, IEEE Trans. Comput. 64(3) (2015) 876-881.

\bibitem{ZFJZ}
W.J. Zhou, J.X. Fan, X.H. Jia, S.K. Zhang, The spined cube: A new hypercube variant with smaller diameter, Inform. Process. Lett. 111 (2011) 561-567.
\bibitem{Zhu}
X.D. Zhu, A hypercube variant with small diameter. J. Graph Theory 85(3) (2017) 651-660.

\end{thebibliography}
\end{document}